\documentclass[a4paper]{amsart}
\usepackage[margin=3.5cm]{geometry}
\usepackage[utf8]{inputenc}
\usepackage{amsmath,amsfonts,amssymb,amsthm, multirow}
\usepackage{comment, stackrel}
\usepackage{hyperref}
\usepackage{makecell}
\usepackage{hhline}
\usepackage{tikz}
\usepackage{tikz-cd}

\theoremstyle{plain}
\newtheorem{theorem}{Theorem}[section]

\newtheorem{proposition}[theorem]{Proposition}

\renewcommand{\phi}{\varphi}
\renewcommand{\tilde}{\widetilde}

\theoremstyle{definition}

\theoremstyle{remark}
\newtheorem{remark}[theorem]{Remark}

\newcommand{\Z}{\mathbb{Z}}                    
\newcommand{\Q}{\mathbb{Q}}                    
\newcommand{\R}{\mathbb{R}}                    
\newcommand{\C}{\mathbb{C}}                    
\renewcommand{\P}{\mathbb{P}}                    %
\newcommand{\A}{\mathbb{A}}                    %
\newcommand{\F}{\mathbb{F}}                    
\renewcommand{\O}{\mathcal{O}}                    %
\newcommand{\M}{\mathcal{M}}                    %
\newcommand{\CF}{\mathcal{F}} 
\newcommand{\CQ}{\mathcal{Q}} 
\newcommand{\VV}{\mathbb{V}}

\newcommand{\q}{{\bf q}}

\newcommand{\Cl}{\operatorname{Cl}}

\newcommand{\Proj}{\operatorname{Proj}}

\newcommand{\Hom}{\operatorname{Hom}}

\newcommand{\im}{\operatorname{im}}
\newcommand{\Bs}{\rm Bs\,}
\newcommand{\Sing}{{\rm Sing}}


\title{On K3 fibred Calabi--Yau threefolds in weighted scrolls}

\author{Geoffrey Mboya}
\address{Mathematical Institute, 
University of Oxford, 
United Kingdom}
\email{mboya@maths.ox.ac.uk}

\author{Bal\'{a}zs Szendr\H{o}i}
\address{Faculty of Mathematics, 
University of Vienna, 
Austria}
\email{balazs.szendroi@univie.ac.at}

\begin{document}
\begin{abstract}
The aim of this paper is to classify mildly singular Calabi--Yau threefolds fibred in low-degree weighted K3 surfaces and embedded as anticanonical hypersurfaces in weighted scrolls, extending results of Mullet. We also study projective degenerations, revisiting an example due to Gross and Ruan. Finally we briefly discuss the general question of embedding a projective fibration into a weighted scroll. 
\end{abstract}

\maketitle

\section{Introduction}

Let $(X,D)$ be a polarized projective variety over $\C$: a pair consisting of a normal projective $\C$-variety $X$ and an ample $\Q$-Cartier Weil divisor $D$ on $X$, with associated divisorial sheaf $\O_X(D)$. We can then consider the graded $\C$-algebra
\[ R = \bigoplus_{m=0}^\infty H^0(X, \O_X(mD)).\]
Ampleness of $D$ is equivalent to an isomorphism $X\cong\Proj R$; in particular, the algebra $R$ is finitely generated. 
Choosing a set of algebra generators $r_1, \ldots, r_n\in R$ of positive integer weights $c_1, \ldots, c_n$ gives a surjection 
\[ S=\C[x_1, \ldots, x_n] \twoheadrightarrow R\] of graded algebras, and a corresponding embedding
\begin{equation} (X,D) \hookrightarrow \left(\P^{n-1}[c_1, \ldots, c_n], \O(1)\right) 
\label{emb}\end{equation}
into a weighted projective space. Under suitable vanishing assumptions, the dimension of the $m$-th graded piece of $R$ is given by a Riemann--Roch type formula, allowing the
computation of the Hilbert series of $R$.  

Miles Reid's {\em graded ring method} studies classes of projective varieties of increasing complexity according to their codimension in the embedding~\eqref{emb}, using the information derived from the Hilbert series. Well-studied classical examples include the ``famous 95'' K3 hypersurfaces in weighted projective 3-space due to Iano-Fletcher and Reid~\cite[Sect.13.3]{fletcher} and the attached families of $\Q$-Fano 3-folds~\cite[Sect.16.6]{fletcher}, \cite{ABR}; the codimension 2 complete intersection K3 surfaces of Iano-Fletcher~\cite[Sect.13.8]{fletcher}; the codimension 3 Pfaffian K3 families~\cite{altinok} and others. The current state of the art is contained in the Graded Ring Database~\cite{grd, gk}. 

In a paper~\cite{mullet} by Mullet, the first steps were taken to study a relative version of these constructions: the case of a polarized projective fibration $f\colon X\rightarrow B$ over a polarized base~$B$. In that paper, the following specific setup was studied: 
\begin{enumerate}
\item[(i)] the base $B\cong\P^1$; 
\item[(ii)] the general fibre of $f$ is one of the ``famous 95'' list of Iano-Fletcher--Reid;
\item[(iii)] the fibration $f\colon X\rightarrow\P^1$ embeds into a weighted scroll $\pi\colon \F\rightarrow\P^1$ as a quasi-smooth anticanonical hypersurface.
\end{enumerate}

Our aim is to study this problem further, extending the study of K3 hypersurface-fibred Calabi--Yau threefolds in two ways. On the one hand, we return to the first few cases of the Iano-Fletcher--Reid list, and for these fibres relax assumption (iii),
to allow for isolated singularities along the base locus. This allows for a longer list of examples, with some new cases of potential further interest; 
all our examples have projective Calabi--Yau resolutions. 
On the other hand, we use the explicit realisation of some of the threefolds as hypersurfaces in scrolls to study their projective geometry including degenerations, 
re-visiting from a different point of view an example due to Gross~\cite{gross}, also studied by Ruan~\cite{ruan}. 

We begin in Section~\ref{sec:defs} by recalling the definition and basic properties of weighted scrolls, and discuss their toric geometry and anticanonical hypersurfaces. Our main results are contained in Section~\ref{sec:K3fam}. In Theorems~\ref{thm_quartic} and \ref{thm_others}, we classify K3-fibred Calabi--Yau threefolds with mild singularities embedded in weighted scrolls over $\P^1$, whose generic fibres are quartic, quintic or sextic weighted K3's. We also study aspects of the projective geometry of some of our examples. 

Other searches of a similar nature may also be of interest; the first author's thesis~\cite{mboya} will study, and list, polarized elliptic K3 surfaces with rational double point singularities, as 
well as higher codimension examples. 

In the closing Section~\ref{sect_end}, we speculate on how far the conditions (i)--(iii) can be relaxed to find interesting examples of projective fibrations, leaving detailed investigations of these questions to future work. 

\vspace{0.1in}

\noindent {\bf Acknowledgements} The first-named author would like to express his thanks to the Mathematics and Physical Sciences programme of the Simons Foundation (Award ID: 599410) and the Mathematical Institute, University of Oxford for support during his doctoral studies.

\section{Preliminaries on weighted scrolls}
\label{sec:defs}

\subsection{Basics}

We write this section in the generality needed for our examples, in order not to over-burden notation, following the treatment in~\cite{reid_chapters} in the unweighted case. Fix an integer $n>1$, a set of positive integer weights $(b_1, b_2, \ldots, b_n)$ and arbitrary integer twists $(a_1, \ldots, a_n)$. To simplify the situation and because this is the case in all our applications, we assume from the outset that the first weight $b_1=1$; as usual, the remaining weights will be ordered in weakly increasing order. 

Consider the action 
\begin{equation}
(\lambda, \mu) \colon (t_1, t_2; x_1,\ldots, x_n) \mapsto (\lambda t_1, \lambda t_2; \lambda^{-a_1}\mu^{b_1}x_1,\ldots, \lambda^{-a_n}\mu^{b_n}x_n)
\label{action}
\end{equation}
of $(\C^*)^2$ on affine space $\C^2\times \C^n$. This action preserves the open subset
\[ U = (\C^2\setminus \{0\}) \times (\C^n\setminus \{0\}).
\]
We define the weighted scroll of type $(a_1,\ldots, a_n |\, b_1, \ldots, b_n)$ to be the quotient
\[ \F(a_1,\ldots, a_n |\, b_1, \ldots, b_n) = U/(\C^*)^2.
\] 
with quotient map
\[ q\colon U \to \F(a_1,\ldots, a_n |\, b_1, \ldots, b_n).
\]
The $(\C^*)^2$-orbits are easily checked to be closed on $U$, and it is known that $q$ is a geometric quotient. The $(\C^*)^2$-action has finite stabilizers along certain loci in $U$, leading to finite
quotient singularities on $\F(a_1,\ldots, a_n |\, b_1, \ldots, b_n)$. There are no stabilisers if $b_2=\ldots =b_n=1$ also. 

It is clear that first projection on $U\subset\C^2\times \C^n$ is compatible with taking a quotient, so we get a morphism 
\[ \pi\colon \F(a_1,\ldots, a_n |\, b_1, \ldots, b_n) \to \P^1
\] 
with fibres that are all isomorphic to the weighted projective space $\P^{n-1}[b_1, b_2, \ldots. b_n]$. 

Note that given weights $(1, b_2, \ldots, b_n)$, replacing a set of twists $(a_1, \ldots, a_n)$ by the set $(a_1+k, a_2+kb_2\ldots, a_n+kb_n)$ leads to an isomorphic quotient. Thus, assuming 
$b_1=1$ as we do, we may also assume $a_1=0$.  

Algebraically, we can considering the coordinate algebra
\begin{equation}\label{ring S}
 S = \C[t_1, t_2, x_1,\ldots, x_n]
\end{equation}
of $\C^2\times \C^n$. The $(\C^*)^2$-action translates into the bigrading on $S$ that assigns degree $(1,0)$ to the generators $t_1, t_2$ and $(-a_i, b_i)$ to the generator $x_i$. 
We can, and will, use the 
variables $(t_i; x_j)$ analogously to projective coordinates on (weighted) projective spaces. 

\begin{remark} Fix a further integer $k>1$, and another set of positive integer weights $(c_1, \ldots c_k)$. The action of $(\C^*)^2$ on affine space $\C^k\times \C^n$ defined by
\[
(\lambda, \mu) \colon (t_1, \ldots, t_k; x_1,\ldots, x_n) \mapsto (\lambda^{c_1} t_1, \ldots, \lambda^{c_k} t_k; \lambda^{-a_1}\mu^{b_1}x_1,\ldots, \lambda^{-a_n}\mu^{b_n}x_n)
\]
gives rise to a weighted scroll $\F(c_1, \ldots c_k || a_1,\ldots, a_n |\, b_1, \ldots, b_n)=U/(\C^*)^2$ with a map \[\pi\colon\F(c_1, \ldots c_k || a_1,\ldots, a_n |\, b_1, \ldots, b_n)\rightarrow \P^{k-1}[c_1, \ldots, c_k],\] whose fibres are weighted projective spaces  $\P^{n-1}[b_1, b_2, \ldots. b_n]$. 
We do not use this generality in the main body of the paper, but will return to this more general construction at the end. 
\label{general}
\end{remark} 

\subsection{The toric description}\label{sect_toric}
Weighted scrolls are clearly toric varieties; in this section, we make this explicit. For simplicity, in this section $\F$ denotes a scroll $\F(a_1,\ldots, a_n |\, b_1, \ldots, b_n)$ over
$\P^1$ given by twists and weights as above, with $b_1=1$. 

Start with the exact sequence of abelian groups 
\begin{equation} 0 \longrightarrow \Z^2 \longrightarrow \Z^{2+n} \longrightarrow N \longrightarrow 0, \label{defN} \end{equation}
where the first map maps the standard generators of $\Z^2$ to the vectors $(1,1,-a_1, \ldots, -a_n)$ and $(0,0,1, b_2, \ldots, b_n)$ respectively. Note that by assuming $b_1=1$, we
automatically get that~$N$ is a free $\Z$-module. The standard lattice generators of $\Z^{2+n}$ project to elements of~$N$ that we denote by $\sigma_1, \sigma_2, \rho_1, \ldots, \rho_n$. Note that by our definition of~$N$, they satisfy the relations
\[ \sigma_1+\sigma_2 - \sum_{j=1}^n a_j \rho_j=0, \ \ \ \ \sum_{j=1}^n b_j \rho_j = 0.
\]
So with $b_1=1$, we see that $\{\sigma_2, \rho_2, \ldots, \rho_n\}$ is a $\Z$-basis of~$N$. 

We define a fan $\Sigma$ in the $n$-dimensional real space $N_\R$ by declaring the maximal dimensional cones to be
\[\tau_{i,j} = {\rm Span}(\sigma_i, \rho_1, \ldots, \rho_{j-1}, \rho_{j+1} \ldots, \rho_{n});\]
it is easy to check that this is indeed a fan.

\begin{proposition} Using standard toric notation, the toric variety $X_{N,\Sigma}$ is isomorphic to the scroll $\F=\F(a_1,\ldots, a_n |\, b_1, \ldots, b_n)$ defined above, whereas the bigraded algebra 
$S$ in~\eqref{ring S} becomes the Cox ring~\cite{cox} of this toric variety. 
\end{proposition}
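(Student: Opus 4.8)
The plan is to invoke the standard Cox construction of a toric variety from its fan and to match its output term by term with the explicit quotient presentation of $\F$. Recall that for a fan $\Sigma$ whose rays span $N_\R$, the toric variety $X_{N,\Sigma}$ is the geometric quotient $(\C^{\Sigma(1)}\setminus Z(\Sigma))/G$, where $\C^{\Sigma(1)}$ carries one coordinate per ray, $G=\Hom(\Cl(X_{N,\Sigma}),\C^*)$ acts through the grading, and $Z(\Sigma)=V(B(\Sigma))$ is the vanishing locus of the irrelevant ideal $B(\Sigma)$. Since the rays of $\Sigma$ are exactly $\sigma_1,\sigma_2,\rho_1,\ldots,\rho_n$, the Cox coordinate ring is the polynomial ring in $2+n$ variables, which I identify with $t_1,t_2,x_1,\ldots,x_n$, i.e. with the underlying ring of $S$.

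First I would pin down the grading. It is governed by the divisor sequence $0\to M\to\Z^{\Sigma(1)}\to\Cl(X_{N,\Sigma})\to 0$ (with $M=\Hom(N,\Z)$), which is exact precisely because the rays span $N_\R$. This sequence is nothing but the dual of~\eqref{defN}: dualizing the split short exact sequence $0\to\Z^2\to\Z^{2+n}\xrightarrow{\pi}N\to 0$ of free modules gives $0\to M\to\Z^{2+n}\to\Z^2\to 0$, so $\Cl(X_{N,\Sigma})\cong\Z^2$ and the degree map is the transpose of the inclusion in~\eqref{defN}. Reading it off assigns degree $(1,0)$ to $t_1,t_2$ and $(-a_i,b_i)$ to $x_i$, which is exactly the bigrading on $S$. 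Consequently $G=\Hom(\Z^2,\C^*)=(\C^*)^2$ acts on $\C^{2+n}$ through these weights, reproducing the action~\eqref{action} verbatim.

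Next I would compute the exceptional locus. For the maximal cone $\tau_{i,j}$ the omitted rays are the single $\sigma$ with index different from $i$ together with $\rho_j$, so the associated generator of $B(\Sigma)$ is $t_{3-i}\,x_j$. Letting $i\in\{1,2\}$ and $j\in\{1,\ldots,n\}$ range over all values yields $B(\Sigma)=(t_1,t_2)\cdot(x_1,\ldots,x_n)$, hence $Z(\Sigma)=V(t_1,t_2)\cup V(x_1,\ldots,x_n)$ and $\C^{2+n}\setminus Z(\Sigma)=(\C^2\setminus\{0\})\times(\C^n\setminus\{0\})=U$. The fan is simplicial, each $\tau_{i,j}$ being spanned by $n$ rays in the $n$-dimensional space $N_\R$, so the Cox quotient is geometric; as it is then a geometric quotient of $U$ by $(\C^*)^2$ acting as in~\eqref{action}---the very data defining $\F=U/(\C^*)^2$---uniqueness of geometric quotients identifies $X_{N,\Sigma}$ with $\F$ and simultaneously exhibits $S$, with its bigrading, as the Cox ring.

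I expect the only genuinely delicate point to be the computation of $B(\Sigma)$ and the check that $Z(\Sigma)$ is exactly the complement of $U$; the remainder is a mechanical comparison of the dualized lattice sequence~\eqref{defN} with the prescribed bigrading. It is also worth confirming at the outset that the $\sigma_i,\rho_j$ are the primitive generators of genuine rays and that the $\tau_{i,j}$ patch together into a fan, though the construction of $\Sigma$ has already arranged this.
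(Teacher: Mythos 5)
Your proposal is correct and follows essentially the same route as the paper: both identify the $(\C^*)^2$-action of~\eqref{action} with the Cox quotient group attached to the sequence~\eqref{defN} (your dualization followed by $\Hom(-,\C^*)$ is the paper's tensoring of~\eqref{defN} by $\C^*$, the two being equivalent for free modules), and both match the irrelevant ideal $B(\Sigma)=\langle t_ix_j\rangle$ with the complement of $U$ before concluding via the geometric-quotient description. You merely spell out in more detail the steps the paper leaves as a direct comparison with the standard references, including the cone-by-cone computation of $B(\Sigma)$ and the simpliciality needed for the quotient to be geometric.
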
 
\begin{proof} Tensoring~\eqref{defN} by $\C^*$, we get an exact sequence of tori
\[ 1 \longrightarrow (\C^*)^2 \longrightarrow  (\C^*)^{2+n} \longrightarrow T \longrightarrow 1\]
mapping $(\lambda, \mu)$ to $(\lambda, \lambda, \lambda^{-a_1}\mu^{b_1},\ldots, \lambda^{-a_n}\mu^{b_n})$. 
This precisely describes the action of $(\C^*)^2$ on $\C^2\times \C^n$ from~\eqref{action} as a subgroup of the standard torus of $\C^2\times \C^n$. Now the description of~$\F$ above can be directly compared to the standard toric description of $X_{N,\Sigma}$ from~\cite{fulton}, to check that everything agrees. In particular, the irrelevant ideal~\cite{cox} corresponding to our chosen fan $\Sigma$ is the ideal 
\[ B=\langle t_ix_j : 1\leq i\leq 2, 1\leq j\leq n\rangle\lhd S,\]
the vanishing locus $Z\subset \C^2\times \C^n$ of which is the complement of our open set $U$. 
\end{proof}

Let $\Cl(\F)$ denote the group of Weil divisors of $\F$. The one-dimensional cones in our fan give us divisors $D_{\sigma_i}=\{t_i=0\}$ and $D_{\rho_j} = \{x_j=0\}$ on $\F$. Let $L=[D_{\sigma_1}]\in\Cl(\F)$ and
$M=[D_{\rho_1}]\in\Cl(\F)$. 

Assume that, using $b_1=1$, we have adjusted the twist so that $a_1=0$. 
\begin{proposition} We have \[ \Cl(\F) \cong \Z L \oplus \Z M.\] 
For $n,m\in Z$, we have 
\[ H^0(\F, nL + mM) \cong S_{n,m},
\]
the bidegree $(n,m)$ piece of the graded algebra $S$. 
The canonical class of $\F$ is given by 
\begin{equation} 
\label{canonical class}
K_\F = \left(-2 + \sum_{j=2}^n a_j \right)L - \left( -1 -\sum_{j=2}^n b_j\right)M\in\Cl(\F).
\end{equation}
Finally, $\F$ is $\Q$-factorial: every divisor on it is $\Q$-Cartier. 
\end{proposition}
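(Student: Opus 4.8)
The plan is to derive all four statements from the standard toric dictionary, using the identifications of $\F$ with $X_{N,\Sigma}$ and of $S$ with its Cox ring established in the previous proposition.

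First, for the class group, I would invoke the standard exact sequence $0 \to N^* \to \Z^{\Sigma(1)} \to \Cl(\F) \to 0$ attached to a toric variety whose rays span $N_\R$, in which the first map sends a character to $(\langle m, u_\rho\rangle)_\rho$ as $u_\rho$ ranges over the primitive ray generators $\sigma_1, \sigma_2, \rho_1, \ldots, \rho_n$. Since these generators are precisely the images of the standard basis of $\Z^{2+n}$ under the surjection in \eqref{defN}, this sequence is canonically the $\Z$-dual of \eqref{defN}; as all terms are free, dualizing preserves exactness. Hence $\Cl(\F) \cong (\Z^2)^* \cong \Z^2$, and the quotient map $\Z^{\Sigma(1)} \to \Cl(\F)$ is the transpose of the first map of \eqref{defN}. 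Reading off that transpose gives $[D_{\sigma_1}] = [D_{\sigma_2}] = L$ and $[D_{\rho_j}] = -a_j L + b_j M$, and in particular (using $a_1 = 0$, $b_1 = 1$) that $[D_{\rho_1}] = M$; thus $\{L, M\}$ is a $\Z$-basis, proving the first claim.

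For the global sections, I would appeal to the description of sections of toric divisorial sheaves through the Cox ring: for a toric variety with no torus factor, $H^0(\F, \O_\F(D)) \cong S_{[D]}$, the graded piece in degree $[D]\in\Cl(\F)$ \cite{cox}. Under the identification $\Cl(\F) = \Z L \oplus \Z M$ just obtained, the class of $t_i$ is $L=(1,0)$ and that of $x_j$ is $-a_jL + b_jM = (-a_j, b_j)$, which is exactly the bigrading on $S$ recorded after \eqref{ring S}; therefore $S_{n,m} = S_{nL+mM}$, giving $H^0(\F, nL+mM)\cong S_{n,m}$. The canonical class then follows from the toric formula $K_\F = -\sum_\rho D_\rho$: substituting $[D_{\sigma_1}]=[D_{\sigma_2}]=L$ and $[D_{\rho_j}]=-a_jL+b_jM$ and collecting terms (with $a_1=0$, $b_1=1$) yields \eqref{canonical class}.

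Finally, $\Q$-factoriality of a toric variety is equivalent to simpliciality of its fan, so it remains to check that each maximal cone $\tau_{i,j}$ is simplicial, i.e.\ that its $n$ generators $\{\sigma_i\}\cup\{\rho_k : k\neq j\}$ are linearly independent in the $n$-dimensional space $N_\R$. Here I would use the two defining relations $\sum_k b_k\rho_k = 0$ and $\sigma_1+\sigma_2 = \sum_k a_k\rho_k$ together with the fact that $\{\sigma_2, \rho_2, \ldots, \rho_n\}$ is a basis of $N$: since every $b_k>0$, the unique relation among the $\rho_k$ has all coefficients nonzero, so any $n-1$ of them are independent and span the hyperplane $\langle \rho_2,\ldots,\rho_n\rangle$; and each $\sigma_i$ has nonzero $\sigma_2$-coordinate in this basis and so lies off that hyperplane. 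This is the one step demanding a genuine, if short, computation, and where I would take most care; the remaining assertions are formal consequences of the toric dictionary.
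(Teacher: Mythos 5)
Your proposal is correct and takes essentially the same approach as the paper: dualize the sequence \eqref{defN} (exactness of the dual using freeness of $N$, which the paper secures via $b_1=1$) to get $\Cl(\F)\cong\Z L\oplus\Z M$ with $[D_{\sigma_1}]=[D_{\sigma_2}]=L$ and $[D_{\rho_j}]=-a_jL+b_jM$, quote Cox's description of sections, compute $K_\F=-\sum_\rho D_\rho$ in this basis, and deduce $\Q$-factoriality from simpliciality of the fan --- with your explicit linear-independence check of the cone generators being a welcome detail the paper leaves implicit. One caveat: collecting terms in your computation actually gives $K_\F=\bigl(-2+\sum_{j=2}^n a_j\bigr)L-\bigl(1+\sum_{j=2}^n b_j\bigr)M$, equivalently $-K_\F=dL+eM$ as the paper uses later, which matches \eqref{canonical class} only after correcting the evident double-negative sign typo in its $M$-coefficient (a typo the paper's own proof reproduces verbatim).
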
 
\begin{proof} This is all well known toric geometry. With $M=\Hom(N,\Z)$, the dual of the sequence~\eqref{defN} is the sequence
\[ 0 \longrightarrow M \longrightarrow \Z^{2+n} \longrightarrow \Cl(\F) \longrightarrow 0, \]
with the first map given by $m\mapsto \sum_\rho \langle m,\rho\rangle D_\rho$, where the sum runs over all 1-dimensional cones in the fan $\Sigma$, and angle brackets denote the canonical pairing between $M$ and $N$. So the class group of $\F$ is free of rank 2. Also elements of $M$ give relations between the classes of various torus-invariant divisors in $\Cl(\F)$; explicitly these become the relations
\[ [D_{\sigma_1}]-[D_{\sigma_2}]=0 \mbox{ and for all } 2\leq j \leq n, \ a_j [D_{\sigma_1}]  - b_j [D_{\rho_1}] + [D_{\rho_j}] = 0. \]
We see that indeed $L=[D_{\sigma_1}]$ and $M=[D_{\rho_1}]$ give a $\Z$-basis for $\Cl(\F)$. The next statement is a standard part of the Cox ring package. The canonical class of $\F$ is 
\[ K_\F = -[D_{\sigma_1}] -[D_{\sigma_2}] - \sum_{j=1}^n [D_{\rho_j}] =  \left(-2 + \sum_{j=2}^n a_j \right)L - \left( -1 -\sum_{j=2}^n b_j\right)M
\]
using the relations above. Finally, as all cones in $\Sigma$ are simplicial, $\F=X_{N,\Sigma}$ is $\Q$-factorial. 
\end{proof}

\subsection{Generalities on anticanonical sections}

Fix a weighted scroll  \[ \pi\colon\F=\F(a_1,\ldots, a_n |\, b_1, \ldots, b_n)\to \P^1.\] 
Assume that the anticanonical system $|-K_{\F}|$ is nonempty. Throughout this paper, we are interested in general anticanonical hypersurfaces $X\in |-K_{\F}|$, which 
are themselves be fibered over $\P^1$. To fix notation, let \[d= 2-\sum_{j=1}^n a_j, \ \ \ e = \sum_{j=1}^n b_j\] be the negatives of the integer constants appearing in~\eqref{canonical class} above, so that 
\[-K_{\F} = dL + eM\in \Cl(\F).\] The hypersurface $X$ is then defined inside $\F$ as the zero locus of a general bihomogeneous section $f(t_i, x_j)\in S_{d,e}$. 

We have to be more explicit about the equation $f$. Recall that $S$ is bigraded, with $t_i$, $x_j$ having degree $(1,0)$ and $(-a_j, b_j)$ respectively. 
For an $n$-tuple of non-negative integers $\q=(q_1, q_2, \ldots, q_n)$, write $\q\vdash_w e$ to mean that $\q$ is 
a $(b_1, \ldots, b_n)$-weighted partition of $e$, in other words 
\[ \sum_{j=1}^n b_jq_j = e.
\]
It is then easy to see that the polynomial $f$ must have the form
\[ f(t_i, x_j) = \sum_{\q\vdash_w e} \alpha_{\q} (t_i) \prod_{j=1}^n x_j^{q_j}, 
\]
where $\alpha_{\q} (t_i)$ is a homogeneous polynomial of the variables $(t_1, t_2)$ of degree 
\[ \deg \alpha_{\q}  = d+\sum_{j=1}^n a_jq_j = 2 + \sum_{j=1}^n (q_j-1)a_j,
\]
as long as this expression is non-negative; otherwise of course the monomial $\prod_{j=1}^n x_j^{q_j}$ does not appear in~$f$.

Recall the quotient construction $\F= U/(\C^*)^2$ together with the quotient map $q\colon U\to\F$, and let $\tilde X=q^{-1}(X) \subset U$, the zero-locus of the polynomial $f$ inside the set $U\subset\A^{2+n}$.  Let $Z\subset\F$ be a $T$-invariant closed subvariety. We call a hypersurface $X\subset\F$ {\em quasismooth away from~$Z$}, if $q^{-1}(X\setminus Z) \subset q^{-1}(\F\setminus Z)$ 
is nonsingular. In this case, the only singularities of $X$ away from $Z$ are finite quotient singularities arising from finite stabiliser subgroups inside~$(\C^*)^2$. 

We recall also that $X\subset\F$ is {\em well-formed}, if the codimension of $\Sing(\F)\cap X\subset X$ is at least two.

Denote $B=\Bs|-K_\F|$ the base locus of the anticanonical system of $\F$.

\begin{proposition}\begin{enumerate}\label{toric lemma}
\item[(i)] The subset $B\subset\F$ is $T$-invariant. 
\item[(ii)] The general anticanonical hypersurface $X\subset\F$ is quasismooth away from $B$. 
\item[(iii)] If $X$ is well-formed, and has Gorenstein singularities along $B$, then it is a Calabi--Yau threefold. 
\end{enumerate}
\end{proposition}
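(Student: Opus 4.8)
For part (i), I would exploit that $-K_\F = dL + eM$ is the class of a torus-invariant Weil divisor, so that the space of sections $H^0(\F, -K_\F) \cong S_{d,e}$ carries a basis of $T$-eigenvectors, namely the bihomogeneous monomials $\alpha_\q(t_i)\prod_{j} x_j^{q_j}$ exhibited before the statement. The base locus $B$ is the common vanishing locus of all sections, equivalently of this monomial basis. Each monomial is a $T$-semi-invariant, so its zero locus is a union of torus-orbit closures and hence $T$-invariant; as an intersection of $T$-invariant closed sets, $B$ is itself $T$-invariant.

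For part (ii), I would pass to the quotient presentation $\F = U/(\C^*)^2$ and work with $\tilde X = q^{-1}(X) = \{f = 0\} \subset U$. Over the smooth quasi-affine variety $U$ the linear system spanned by $S_{d,e}$ has base locus precisely $q^{-1}(B)$, by (i) and the definition of $B$. The classical Bertini theorem in characteristic zero then shows that for general $f$ the member $\tilde X$ is nonsingular away from $q^{-1}(B)$, that is, $q^{-1}(X\setminus B) = \tilde X \setminus q^{-1}(B)$ is smooth. This is exactly quasismoothness of $X$ away from $B$, the remaining singularities of $X$ off $B$ being the finite quotient singularities produced by the stabilisers of the $(\C^*)^2$-action.

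Part (iii) is where the substance lies, and I would assemble it from four ingredients. First, a dimension count gives $\dim X = \dim\F - 1 = 3$ in the K3-fibred setting of interest (the weights being chosen so that the fibrewise anticanonical hypersurface is a surface), and $X$ is projective since $\F$ is. Second, I would locate the singularities: by (ii) any singular point of $X$ off $B$ lies in $\Sing(\F)\cap X$, which well-formedness keeps in codimension at least two, while along $B$ the Gorenstein hypothesis supplies the Cohen--Macaulay ($S_2$) property; together with Cohen--Macaulayness of the toric ambient this exhibits $X$ as a normal Cohen--Macaulay variety, well-formedness being precisely the condition that validates such adjunction-type arguments. Third, I would compute the canonical class by adjunction: on the locus where $\F$ is smooth one has $K_X = (K_\F + X)|_X = 0$ because $X \in |-K_\F|$, and normality together with well-formedness propagates this to an isomorphism $\omega_X \cong \O_X$ on all of $X$; since $\O_X$ is invertible, $\omega_X$ is Cartier and the Cohen--Macaulay $X$ is therefore Gorenstein. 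Fourth, for the cohomological conditions I would feed the structure sequence $0 \to \O_\F(K_\F) \to \O_\F \to \O_X \to 0$ into its long exact sequence, using $H^i(\F, \O_\F) = 0$ for $i > 0$ (a complete toric variety) together with Serre duality $H^j(\F, \O_\F(K_\F)) \cong H^{n-j}(\F, \O_\F)^*$ on the Cohen--Macaulay $\F$ to conclude $H^i(X, \O_X) = 0$ for $0 < i < \dim X$ and $H^0(X, \O_X) = \C$. These four facts together exhibit $X$ as a Gorenstein Calabi--Yau threefold.

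The main obstacle throughout (iii) is that $-K_\F$, and hence the divisor $X$, is in general only $\Q$-Cartier rather than Cartier, precisely along the quotient-singular locus of $\F$. The argument must therefore use well-formedness carefully: to guarantee that the adjunction isomorphism $\omega_X \cong \O_X$ holds as an equality of honest line bundles rather than merely of reflexive sheaves agreeing in codimension one, and to justify the structure sequence and the use of Serre duality away from a locus of codimension at least two that does not disturb the relevant cohomology. This is what upgrades the quotient singularities off $B$, and the assumed singularities along $B$, to genuine Gorenstein singularities of a Calabi--Yau threefold.
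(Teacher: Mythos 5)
Your proposal is correct and takes essentially the same approach as the paper: for (i) your monomial eigenbasis argument is the concrete form of the paper's one-line observation that $T$ acts trivially on the discrete group $\Cl(\F)$, and for (ii) and (iii) the paper simply cites Mullet (Prop.~6.7 and Thm.~8.3), whose proofs are precisely the Bertini argument on the Cox cover and the adjunction-plus-cohomology argument you reconstruct, including the care about $\Q$-Cartier divisors, well-formedness and extension of reflexive sheaves that you correctly flag. No gaps to report.
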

\begin{proof} (i) is in fact true more generally for any divisor class, and follows from the fact that the torus $T$ acts trivially on the discrete group $\Cl(\F)$. 
(ii) is [Mullet, Prop.~6.7]. (iii) is a mild generalization of \cite[Thm.8.3]{mullet} and the proof there applies verbatim.  
\end{proof}

Thus as long as the singularities of $X$ are mild, it is a Calabi-Yau variety fibred over $\P^1$, with fibres $X_t\subset \P^{n-1}[b_1, \ldots, b_n]$ for $t\in\P^1$ that are themselves Calabi--Yau hypersurfaces. 
In order to find not-too-singular hypersurfaces $X\subset\F$, we thus need to focus on the possible singularities of $q^{-1}(B)\cap \tilde X$. 

\section{Mildly singular threefold families of K3 surfaces}
\label{sec:K3fam}

\subsection{Families of quartic K3s}
\label{subsec:quartic}

The easiest example of a K3 surface, also the first entry in the Iano-Fletcher--Reid list, is the quartic surface $S_4\subset\P^3$. The simplest example of a family of anticanonical Calabi--Yau hypersurfaces fibred in K3 surfaces is the anticanonical family $X\subset\P^1\times\P^3$. Mullet finds eight further scrolls containing nonsingular Calabi--Yau 
hypersurfaces, which are listed in Table 1 below as familes 1-9. In this case, we find one further example; note that one member of this family was discussed from a different point of view
in~\cite[Rem.2.5]{AL}.

\begin{theorem} There exists exactly one family of quartic $K3$-fibred anticanonical hypersurfaces $X\in |-K_{\F}|$ with isolated singularities along the base locus $B=\Bs |-K_{\F}|$ and quasi-smooth outside $B$: the family of anticanonical hypersurfaces \[X\subset \F(0,0,1,2).\] A general variety $X$ in this family has $3$ threefold ordinary double points along~$B\cong\P^1\times\P^1$, and is nonsingular elsewhere. Blowing up $B$ in $X$ gives a small projective Calabi--Yau resolution $Y\rightarrow X$. 
\label{thm_quartic}
\end{theorem}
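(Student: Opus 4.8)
The plan is to work out the anticanonical family on $\F(0,0,1,2)$ by hand and to treat the uniqueness clause as a separate finite enumeration. For this scroll the constants of~\eqref{canonical class} are $d = 2-\sum a_j = -1$ and $e=4$, so $-K_\F = -L+4M$, and the generators $x_1,\dots,x_4$ carry bidegrees $(0,1),(0,1),(-1,1),(-2,1)$. Writing a general $f\in S_{-1,4}$ in the normal form of the generalities on anticanonical sections, each monomial contributes a coefficient in $(t_1,t_2)$ of degree $-1+q_3+2q_4$, which is forced to be nonnegative; hence every monomial contains $x_3$ or $x_4$, and $f$ vanishes identically on $\{x_3=x_4=0\}$. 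Conversely this locus is the whole base locus $B$, and since $a_1=a_2=0$ it is the product of the $t$-line and the $[x_1:x_2]$-line, so $B\cong\P^1\times\P^1$. I would record the useful expansion $f = x_3\,A(x_1,x_2) + x_4\,B(t;x_1,x_2) + (\text{order}\geq 2\text{ in }x_3,x_4)$, where $A$ is a binary cubic in $(x_1,x_2)$ with constant coefficients and $B$ has bidegree $(1,3)$, i.e. is linear in $t$ and cubic in $(x_1,x_2)$.

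Next I would locate and classify the singularities. By Proposition~\ref{toric lemma}(ii) the general $X$ is quasismooth away from $B$, and since all $b_j=1$ there are no quotient singularities, so $X$ is smooth off $B$. Along $B$ one checks that $\partial f/\partial t_i$ and $\partial f/\partial x_1,\partial f/\partial x_2$ all vanish, while $\partial f/\partial x_3 = A$ and $\partial f/\partial x_4 = B$; thus $\Sing X$ consists exactly of the common zeros of $A$ and $B$ on $B$. For general $f$ the cubic $A$ has three distinct roots on the fibre $\P^1_{[x_1:x_2]}$, and at each root the form $B$, being linear in $t$, cuts out a single point of $\P^1_t$, giving precisely three points. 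To see these are ordinary double points I would compute the Hessian of $f$ at such a point $p$ in local coordinates $(s,u,x_3,x_4)$, with $s$ along $\P^1_t$ and $u$ along $\P^1_{[x_1:x_2]}$: the tangential $2\times 2$ block vanishes because $f|_B\equiv 0$, while the mixed block is $\begin{pmatrix} 0 & \partial_s B \\ A'(u_0) & \partial_u B\end{pmatrix}$, of determinant $-A'(u_0)\,\partial_s B$. This is nonzero for general $f$, since $u_0$ is a simple root of $A$ and $B$ depends nontrivially on $t$; an invertible mixed block forces the full $4\times4$ Hessian to be nondegenerate, so $p$ is a nondegenerate quadratic singularity of the threefold $X$, that is, a threefold ordinary double point.

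For the resolution I would argue as follows. The tangent cone of $X$ at each node $p$ is the rank-$4$ quadric cut out by the Hessian, and $B$ corresponds there to the $2$-plane $\{x_3=x_4=0\}$, which lies entirely on this cone (the tangential block being zero); hence near $p$ the divisor $B$ is one of the two rulings of the conifold, in particular a Weil divisor that is not Cartier. Blowing up $B$ is therefore an isomorphism over the smooth locus of $X$, where $B$ is Cartier, and replaces each node by an exceptional $\P^1$, producing a smooth threefold $Y$ with $Y\to X$ small. Being small it is crepant, so $K_Y$ is the pullback of $K_X=0$, the latter by Proposition~\ref{toric lemma}(iii); and $Y$ is projective as a blow-up of the projective variety $X$. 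Thus $Y\to X$ is a small projective Calabi--Yau resolution.

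Finally, the uniqueness assertion is a finite check rather than a geometric difficulty: one lists the admissible normalised twist vectors $(0,a_2,a_3,a_4)$ yielding a quartic $K3$ fibration with $|-K_\F|\neq\varnothing$, sets aside Mullet's nine quasismooth families, and for each remaining scroll repeats the base-locus and Jacobian analysis above, showing that in every other case the analogue of $\{A=B=0\}$ is positive-dimensional, so that the singularities fail to be isolated. I expect the main obstacle to lie in this case analysis together with making the genericity statements in the $\F(0,0,1,2)$ computation precise, namely that a general $A$ has simple roots and that the mixed Hessian block is generically invertible; both reduce to checking that the relevant linear maps from the space of admissible $f$ to the spaces of binary cubics and of Hessian data are dominant.
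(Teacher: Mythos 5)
Your analysis of the single family on $\F(0,0,1,2)$ is correct and agrees with the paper's: the degree formula $\deg\alpha_{\q}=-1+q_3+2q_4$ forces every monomial of $f$ to involve $x_3$ or $x_4$, so $B=\{x_3=x_4=0\}\cong\P^1\times\P^1$, and the singular locus along $B$ is cut out by the restricted partials $\partial f/\partial x_3|_B$ and $\partial f/\partial x_4|_B$, which are your forms $A$ and $B(t;x_1,x_2)$ of bidegrees $(0,3)$ and $(1,3)$; the count of three nodes matches the paper's ``quick degree count'' (indeed $(0,3)\cdot(1,3)=3$ on $\P^1\times\P^1$). Your Hessian computation, with vanishing tangential block and invertible mixed block $\left(\begin{smallmatrix} 0 & \partial_s B\\ A'(u_0) & \partial_u B\end{smallmatrix}\right)$, usefully fleshes out what the paper compresses into ``a straightforward local analysis,'' and your identification of $B$ with a ruling plane of the rank-$4$ tangent cone correctly explains why $B$ is non-Cartier at each node and why blowing it up is a small, crepant, projective resolution --- exactly the paper's conclusion.

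The genuine gap is the uniqueness clause, which you defer to ``a finite check'': a priori it is not finite, and the mechanism that makes it finite is the actual content of the paper's proof. For instance, $\F(0,1,1,k)$ has $|-K_\F|\neq\emptyset$ for every $k\geq 0$ (the coefficient of $x_4^4$ has degree $3k$), so the set of normalised twist vectors $(0,a_2,a_3,a_4)$ you propose to ``list'' is infinite, and ``repeating the base-locus and Jacobian analysis for each remaining scroll'' cannot terminate without an argument bounding the twists. The paper supplies this in two steps that your plan omits. First, since $B$ is torus-invariant, one stratifies by $\dim B$: $\dim B=3$ forces a fixed divisor, hence reducible general sections; $\dim B=0$ is impossible by the symmetry in $(t_1,t_2)$; $B=\emptyset$ is Mullet's classification; this leaves the strata $\dim B=2$ and $\dim B=1$. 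Second, in the stratum $\dim B=2$ the geometric requirements are translated into linear inequalities on the twists: presence of the monomial $x_3^4$ (no fixed component) gives $2-a_2+3a_3-a_4\geq 0$; absence of all pure $x_1,x_2$ monomials (so that $B=\{x_3=x_4=0\}$ is genuinely in the base locus) gives $2+3a_2-a_3-a_4<0$; and isolatedness of the Jacobian vanishing forces the extreme coefficients $\alpha_{0310},\alpha_{3001}$ to be nonzero, giving $2+2a_2-a_4\geq 0$ and $2-a_2-a_3\geq 0$. Together with $0\leq a_2\leq a_3\leq a_4$ these have exactly two solutions, $(0,0,2,2)$ (Mullet's, with empty singular set) and $(0,0,1,2)$; the stratum $\dim B=1$ then needs its own analogous inequality analysis, which yields nothing new. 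Without this reduction --- the case distinction over $\dim B$ and the conversion of the conditions into twist inequalities with finitely many solutions --- your enumeration step does not go through as stated.
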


The complete list of quartic families with at worst isolated singularities is given in Table~1. The last column of the table contains the dimension of $\M_{X\subset\F}$, the space of 
embedded deformations of the anticanonical hypersurface $X$ in the respective scroll $\F$; this number is easy to calculate as the difference between the number of 
parameters in $\F$ and the dimension of its automorphism group. Note that this is only a lower bound for the dimension of the space of all deformations of $X$, which is the
Hodge number $h^{2,1}$ of (a smooth model of) $X$. 

\begin{table}[h]
\centering
\begin{tabular}{|c|c|c|c| }\hline
&Scroll $\F$ & \makecell{Description of $\Bs|-K_{\F}|$ \\ and general $\text{}X\in|-K_{\F}|$}&  $\dim \M_{X\subset\F}\ $ \\ \hhline{|=|=|=|=|}
1&$\F(0,0,0,0)$&$\Bs|-K_{\F}|=\emptyset,~\text{general } X \text{ nonsingular}$ &86\\ \hline
2&$\F(0,0,0,1)$&$\Bs|-K_{\F}|=\emptyset,~\text{general } X \text{ nonsingular}$ &118\\ \hline
3&$\F(0,0,0,2)$&$\Bs|-K_{\F}|=\emptyset,~\text{general } X \text{ nonsingular}$&83\\  \hline
4&$\F(0,0,1,1)$&$\Bs|-K_{\F}|=\emptyset,~\text{general } X \text{ nonsingular}$&86\\  \hline
5&$\F(0,1,1,1)$& $\dim\Bs|-K_{\F}|=1,~\text{general } X \text{ nonsingular}$ &73\\  \hline
6&$\F(0,1,1,2)$& $\dim\Bs|-K_{\F}|=1,~\text{general } X \text{ nonsingular}$ &86 \\  \hline
7&$\F(0,1,1,3)$&$\dim\Bs|-K_{\F}|=1,~\text{general } X \text{ nonsingular}$&89 \\  \hline
8&$\F(0,1,1,4)$&$\dim\Bs|-K_{\F}|=1,~\text{general } X \text{ nonsingular}$&95\\  \hline		
9&$\F(0,0,2,2)$&$\dim\Bs|-K_{\F}|=2,~\text{general } X \text{ nonsingular}$ &91 \\ \hline
10&$\F(0,0,1,2)$& \makecell{$\dim\Bs|-K_{\F}|=2,~\text{general } X$\\ $\text{ has 3 ODP singularities along }\Bs|-K_{\F}|$} &86 \\ \hline
\end{tabular}
\label{ffoldtbl}
\vspace{0.1in}
\caption{Fourfold scrolls with general anticanonical hypersurfaces fibred in quartic $K3$ surfaces with at worst isolated singularities along the base locus}
\end{table}

\begin{proof}[Proof of Theorem~\ref{thm_quartic}] By complete symmetry of the weights $b_1=\ldots=b_4=1$, we can assume that the twists are $0=a_1\leq a_2\leq a_3\leq a_4$. 
Denoting $\F=\F(0,a_2,a_3,a_4)$ as before, we have 
\[-K_{\F} = (2-a_2-a_3-a_4)L + 4M\in \Cl(\F).\]
The equation of an anticanonical hypersurface is
\[ f(t_i, x_j) = \sum_{\q\vdash 4} \alpha_{\q} (t_i) \prod_{j=1}^4 x_j^{q_j}, 
\]
where $\q=(q_1,\ldots, q_4)$ is a partition of $4$, and $\alpha_{\q} (t_i)$ is a homogeneous polynomial of the variables $(t_1, t_2)$ of degree 
\[ \deg \alpha_{\q}  =  2 + (q_2-1)a_2+ (q_3-1)a_3+ (q_4-1)a_4,
\]
as long as this quantity is non-negative. The degree $\deg \alpha_{\q}$ increases as more of the higher indexed $x$ variables appear in a monomial $\prod_{j=1}^4 x_j^{q_j}$. 

By Proposition~\ref{toric lemma}(i), the base locus $B=\Bs|-K_{\F}|$ is defined by setting some of the Cox variables to $0$. Let us consider cases according to the dimension of $B$. If $\dim B=3$, then there is a fixed divisor in each member of the linear system, and so the general section is reducible. The cases where $B=\emptyset$ have already been classified by Mullet. The case $\dim B=0$ is not possible because of the symmetry of the $t_1, t_2$ variables: the toric base locus $B$ cannot be an isolated point. 

Assume that  $\dim B=2$. This means that $f$ cannot be divisible by any of the~$x_i$ variables. Looking at degrees of coefficients, $f$ must have 
nonzero coefficients at least for $x_3^4$ and  $x_4^4$, giving us the inequality
\[2-a_2+3a_3-a_4\geq0.\]
This also means that the base locus must be
\[ B=\{x_3=x_4=0\}\subset \F,\]
and we cannot have any terms in $f$ only involving $x_1$ and  $x_2$. This gives us the inequality 
\[2+3a_2-a_3-a_4<0.
\]
Let us now investigate potential singularities along the base locus. We have
\[
\Sing(X)\cap B = \VV\left(\frac{\partial f}{\partial x_3}\bigg|_{x_3=x_4=0},\frac{\partial f}{\partial x_4}\bigg|_{x_3=x_4=0}, x_3,x_4\right)\]
with 
\[\frac{\partial f(t_i, x_j) }{\partial x_3}\bigg|_{x_3=x_4=0}=\alpha_{3010}(t_1,t_2)x_1^3+\alpha_{2110}(t_1,t_2)x_1^2x_2+\alpha_{1210}(t_1,t_2)x_1x_2^2+\alpha_{0310}(t_1,t_2)x_2^3\]
and 
\[\frac{\partial f(t_i, x_j) }{\partial x_4}\bigg|_{x_3=x_4=0}=\alpha_{3001}(t_1,t_2)x_1^3+\alpha_{2101}(t_1,t_2)x_1^2x_2+\alpha_{1201}(t_1,t_2)x_1x_2^2+\alpha_{0301}(t_1,t_2)x_2^3.\]
To get isolated singularities along $B$, these two equations must give an isolated set of solutions on $B$. Once again recalling degrees, the condition for this is that the coefficients
$\alpha_{0310}(t_1,t_2)$ and $\alpha_{3001}(t_1,t_2)$ should be nonzero, giving us the last set of inequalities
\[2+2a_2-a_4\geq0\text{ and }2-a_2-a_3\geq0.\]
Together with $0\leq a_2\leq a_3\leq a_4$, it can be checked that the only solutions to our full set of inequalities are $\F=\F(0,0,2,2)$ and $\F=\F(0,0,1,2)$. The former already appeared in Mullet's list and indeed in this case there are no solutions to the last two equations and thus $\Sing(X)\cap B=\emptyset$. In the final case, a quick degree count
shows that for general $f$, there exist three isolated solutions. A straightforward local analysis shows that the resulting singularities on $X$ are ordinary double points, 
and $B$ is a non-Cartier Weil divisor at each singular point. It is then well known that blowing up $B$ gives a small resolution of $X$. 

The analysis in the case $\dim B=1$ is analogous; we omit the details. We do not find any new examples beyond Mullet's in this case.  
\end{proof}
\begin{remark} In families 1-2 of Table 1, the anticanonical divisor class is very ample, so by Lefschetz, the corresponding Calabi--Yau threefolds have Picard number $\rho(X)=2$. 
For families 3 and 4, the anticanonical map is a semismall morphism, so the anticanonical divisor class is {\em lef}, in the language of~\cite{dCM}. Hence by [ibid, Prop.2.1.5], 
Lefschetz still applies, and so again $\rho(X)=2$. The same was proved for family 6 in~\cite[App.A]{ruan}. It seems possible that $\rho(X)=2$ in families 1-8. However, in families 9-10 it is easy to see that the base locus $B$, or its
proper transform, give an extra divisor class in $X$ or its resolution $Y$ which cannot come from the ambient space, so the Picard number of the smooth models is at least (and 
likely to be equal to) $3$. 

One can also ask about the dimension of the space of complex deformations, the Hodge number $h^{2,1}$. For families 1-2, the dimension of the space $\M_{X\subset\F}$ of embedded deformations is 86, respectively 118, and these values agree with the corresponding Hodge numbers. Once again, this is known to be the case also for family 8, with the dimension being also 86. However, in the other cases, especially families 9-10, the determination of this Hodge number appears more difficult. 
\end{remark}

\subsection{Projective degenerations} We look at 
three of the families from Table 1 in some more detail, numbered 1,  
6 and 9: the families of anticanonical hypersurfaces in $\P^1\times\P^3$ and the scrolls
\[\F(0,1,1,2)\cong\P\left(\O_{\P^1}(-1)\oplus\O_{\P^1}\oplus\O_{\P^1}\oplus\O_{\P^1}(1)\right)\]
and \[\F(0,0,2,2)\cong\P\left(\O_{\P^1}(-1)\oplus\O_{\P^1}(-1)\oplus\O_{\P^1}(1)\oplus\O_{\P^1}(1)\right).\]
In this section, we use computer algebra, specifically Macaulay2~\cite{m2} and polymake~\cite{polymake}, to prove some of our statements. 

The first two of these families has in fact appeared some time ago in the papers of Gross~\cite{gross} and Ruan~\cite{ruan}. Gross noted that 
by considering the universal extension over the one-dimensional space 
${\rm Ext}^1(\O_{\P^1}(1), \O_{\P^1}(-1))$, adding two copies of a trivial bundle and projectivizing, one gets a deformation family 
${\mathcal F}\rightarrow\A^1$ with central fibre $\CF_0\cong \F(0,1,1,2)$ and all other fibres isomophic to $\CF_1\cong \P^1\times\P^3$. However, while general members of the anticanonical families $X_1\subset \CF_1$, $X_0\subset \CF_0$ in the two spaces
are nonsingular, as seen above, they cannot be smoothly deformed into each other via this construction. A general anticanonical section $X_1$ in the general fibre $\CF_1$ specialises in the deformation family $\CF$ to a singular Calabi--Yau threefold $\overline{X}_0$, with a curve of canonical singularities along the base locus $B\cong \P^1\subset \F(0,1,1,2)$ of the anticanonical system 
of the central fibre. It is in fact known that the general anticanonical sections $X_1, X_0$ are diffeomorphic, with Hodge numbers $(2,86)$, but not sympectic deformation equivalent~\cite[Thm.A.4.3.]{ruan}, so not algebraic deformation equivalent. The local deformation space of the singular threefold $\bar X_0$, in turn, has (at least) two components in its local deformation space, deforming to the general
$X_0$, respectively $X_1$, and thus has obstructed deformation theory~\cite[Thm.2.2]{gross}. 

We describe a projective version of this specialisation, based on natural maps defined on our scrolls in terms of bi-homogeneous coordinates. As a starting point, consider
the half-anticanonical embedding of $\CF_1=\P^1\times \P^3$ 
\[  \phi_1=\phi_{|-\frac{1}{2}K_{\P^1\times \P^3 }|}\colon  \P^1\times \P^3 \hookrightarrow \P^{19} 
\]
defined, using a slight abuse of notation, by
\[(t_1,t_2\, ; \, x_1,x_2,x_3,x_4)\mapsto (S^1(t_1, t_2) \otimes S^2(x_1,x_2,x_3,x_4)).\]
We find it convenient to use variables $z_{ijkl}$ on $\P^{19}$ with $i, j$ either $0$ or $1$, summing to $1$, and $(k,l)$ are symmetric,
with the map $\phi_1$ defined by $z_{ijkl} = t_1^i t_2^j x_kx_l$. Consider the matrix
\[ M_1=\begin{bmatrix}
	z_{1011}&z_{1012}&z_{1013}&z_{1014} &z_{0111}&z_{0112}&z_{0113}&z_{0114}\\
	z_{1012}&z_{1022}&z_{1023}&z_{1024}&z_{0112}&z_{0122}&z_{0123}&z_{0124}\\
	z_{1013}&z_{1023}&z_{1033}&z_{1034}&z_{0112}&z_{0122}&z_{0123}&z_{0124}\\
	z_{1014}&z_{1024}&z_{1034}&z_{1044}&z_{0114}&z_{0124}&z_{0134}&z_{0144}
\end{bmatrix},
\]
joined from two $4\times 4$ symmetric matrices with independent entries. 

\begin{proposition} The image of $\phi_1$ inside $\P^{19}$ is described (scheme-theoretically) by the ideal generated by $2\times 2$ minors of the matrix $M_1$:
\[\im\phi_1 \cong \VV\left(\wedge^2 M_1 \right) \subset \P^{19}.\]
\label{prop phi1} 
\end{proposition}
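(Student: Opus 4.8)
The plan is to establish the set-theoretic equality first and then upgrade it to the scheme-theoretic statement through a degree-two generation argument. Write $J\lhd\C[z_{ijkl}]$ for the ideal generated by the $2\times 2$ minors of $M_1$, and $I=I(\im\phi_1)$ for the saturated homogeneous ideal of the image. First I would prove $J\subseteq I$ by direct substitution. On $\im\phi_1$ we have $z_{ijkl}=t_1^it_2^jx_kx_l$, so the two symmetric $4\times4$ blocks of $M_1$ become $t_1\,\mathbf{x}\mathbf{x}^\top$ and $t_2\,\mathbf{x}\mathbf{x}^\top$; hence on the image
\[ M_1=\bigl[\,t_1\,\mathbf{x}\mathbf{x}^\top \mid t_2\,\mathbf{x}\mathbf{x}^\top\,\bigr],\]
whose every column is a multiple of the single vector $\mathbf{x}$. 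Thus $M_1$ has rank $\le 1$ along $\im\phi_1$ and all its $2\times2$ minors vanish there.

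Next I would show $\VV(J)=\im\phi_1$ as sets. Let $z\in\VV(J)$ be nonzero; then $M_1$ has rank $\le 1$, so $M_1=\mathbf{u}\,\mathbf{c}^\top$ for some nonzero $\mathbf{u}\in\C^4$, $\mathbf{c}\in\C^8$, i.e.\ $z_{ijkl}=u_k\,c_{ij,l}$. The symmetry $z_{ijkl}=z_{ijlk}$ built into the two blocks forces $u_k c_{ij,l}=u_l c_{ij,k}$; fixing an index $k_0$ with $u_{k_0}\neq 0$ gives $c_{ij,l}=\mu_{ij}u_l$ with $\mu_{ij}=c_{ij,k_0}/u_{k_0}$, whence $z_{ijkl}=\mu_{ij}\,u_ku_l$. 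Since $z\neq0$ and $\mathbf{u}\neq0$ the point $(\mu_{10}:\mu_{01})\in\P^1$ is well defined, and $z=\phi_1\bigl((\mu_{10}:\mu_{01});\,(u_1:\dots:u_4)\bigr)$ lies in the image. Together with the first step this yields $\VV(J)=\im\phi_1$.

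The main work is the scheme-theoretic refinement, and this is where I expect the real obstacle to lie: the minors must generate an ideal defining the image \emph{as a scheme}, not merely as a set, and the relevant intersection here is wildly non-transversal, so no dimension or genericity argument is available directly. The cleanest route is to reduce to degree two. Both ideals are generated by quadrics: $J$ by construction, and $I$ because the Segre--Veronese variety $\im\phi_1$ is projectively normal with ideal generated by quadrics. It therefore suffices to check $J_2=I_2$, i.e.\ that the $2\times2$ minors of $M_1$ span the whole space of quadratic relations, of dimension
\[ \dim I_2=\binom{21}{2}-\dim H^0\bigl(\P^1\times\P^3,\O(2,4)\bigr)=210-3\cdot\binom{7}{3}=210-105=105.\]
Since we already know $J\subseteq I$, this is a finite linear-algebra computation, namely the rank of the coefficient matrix of the $\binom{4}{2}\binom{8}{2}=168$ minors, which I would carry out in Macaulay2; obtaining rank $105$ gives $J_2=I_2$ and hence $J=I$, so $\im\phi_1=\VV(\wedge^2M_1)$ scheme-theoretically. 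Alternatively one can verify $J=I$, or the primality and correct dimension of $J$, directly in Macaulay2; the degree-two reduction has the advantage of turning the scheme-theoretic claim into a single transparent rank computation, and it isolates the one nontrivial input (quadratic generation of the Segre--Veronese ideal) that makes this reduction legitimate.
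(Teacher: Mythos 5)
Your proposal is correct, and its second half takes a genuinely different route from the paper's. The containment $\im\phi_1\subseteq\VV\left(\wedge^2 M_1\right)$ is argued identically in both (rank of $M_1$ at most one on the image, as for the Segre embedding). From there the paper simply verifies by Macaulay2 that the quadrics in $\wedge^2 M_1$ cut out a $4$-dimensional irreducible variety and concludes; read scheme-theoretically, that computer check is really a primality (hence reducedness) check on the ideal of minors, which the paper leaves inside the black box. You instead (a) prove the set-theoretic equality by hand via the rank-one factorization $M_1=\mathbf{u}\,\mathbf{c}^\top$ and the symmetry $z_{ijkl}=z_{ijlk}$ of the two blocks, forcing $z_{ijkl}=\mu_{ij}u_ku_l$ --- a clean elementary argument the paper does not give --- and (b) upgrade to the ideal level by invoking quadratic generation of the Segre--Veronese ideal, reducing the whole scheme-theoretic claim to a single rank computation: your count $\dim I_2=\binom{21}{2}-3\cdot\binom{7}{3}=105$ is right, since the multiplication map $\Sym^2 H^0\bigl(\P^1\times\P^3,\O(1,2)\bigr)\to H^0\bigl(\O(2,4)\bigr)$ is surjective by degree-two projective normality. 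Your route buys more than the paper's: $J=I$ is ideal-theoretic equality, a priori stronger than equality of the zero schemes, and it isolates exactly where the one nontrivial external input enters (quadratic generation of Segre--Veronese ideals, e.g.\ via $2\times 2$ minors of flattenings --- a known theorem, but you should attach a citation rather than assert it). The cost is symmetrical: you still need a machine (or very tedious hand) computation of the rank of $168$ quadrics in a $210$-dimensional space, so neither proof is computer-free; note also that your set-theoretic step, while illuminating, becomes logically redundant once $J_2=I_2$ is established, since $J\subseteq I$ together with $I=(I_2)=(J_2)$ already yields $J=I$.
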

\begin{proof} As in the standard Segre embedding, on the image of $\phi_1$ the rows, respectively columns, of $M_1$ are proportional to each other, so the rank of $M_1$
is at most $1$ on the image. So $\im\phi_1 \subset \VV\left(\wedge^2 M_1 \right)$. From here, the easiest way to conclude is to check by Macaulay2 that the quadrics in $\wedge^2 M_1$ define a $4$-dimensional irreducible variety in $\P^{19}$, so this inclusion is in fact an equality. \end{proof}

Next, consider the half-anticanonical map 
\[  \phi_0=\phi_{|-\frac{1}{2}K_\F|}\colon \F(0,1,1,2) \dashrightarrow  \P^{19}
\]
on the scroll $\CF_0\cong \F(0,1,1,2)$, defined in Cox coordinates $(t_1, t_2; x_1,x_2,x_3,x_4)$ by
\[
 (t_i; x_j)\mapsto (x_1x_2,x_1x_3,S^1(t_i)x_1x_4,S^1(t_i)\otimes S^2(x_2,x_3),S^2(t_i)x_2x_4,S^2(t_i)x_3x_4,S^3(t_i)x_4^2).
\]
On the target $\P^{19}$, we are going to use coordinates $y_{ijmn}$ with the correct index conventions so that the map $\phi_0$ be given by 
$y_{ijmn} = t_1^i t_2^j x_m x_n$. 

The variety $\F(0,1,1,2)$ is not Fano, the rational map $\phi_0$ has indeterminacy locus given by the rational curve $B=\{x_2=x_3=x_4=0\}\subset\F(0,1,1,2)$, the base locus of its (half) anticanonical system. Denote by $Q=\overline{\im \phi_0}\subset\P^{19}$ the closure of the image of $\phi_0$. We give descriptions of $Q$ both as an embedded projective variety, and as a toric variety. First, consider the matrix 
$$M_0=\begin{bmatrix}
	y_{0012}&y_{1022}&y_{1023}&y_{0122}&y_{0123}&y_{2024}&y_{1124}&y_{0224}\\
	y_{0013}&y_{1023}&y_{1033}&y_{0123}&y_{0133}&y_{2034}&y_{1134}&y_{0234}\\
	y_{1014}&y_{2024}&y_{2034}&y_{1124}&y_{1134}&y_{3044}&y_{2144}&y_{1244}\\
	y_{0114}&y_{1124}&y_{1134}&y_{0224}&y_{0234}&y_{2144}&y_{1244}&y_{0234}
\end{bmatrix}.$$
\begin{proposition}\label{prop phi0} \begin{enumerate}
\item The closure $Q=\overline{\im \phi_0}\subset\P^{19}$ is described (scheme-theoretically) by the ideal generated by $2\times 2$ minors of the matrix $M_0$:
\[ Q=\overline{\im \phi_0}\cong \VV\left(\wedge^2 M_0 \right) \subset \P^{19}.\]
\item There is a distinguished divisor $\P^3\cong D\subset Q\subset\P^{19}$ embedded in $\P^{19}$ as a projective linear subspace, defined by the condition that all variables except those in the first column of $M_0$ vanish. 
\item The Hilbert polynomials of $Q\subset \P^{19}$ and $\F(0,1,1,2)\subset \P^{19}$ agree. \end{enumerate}
\end{proposition}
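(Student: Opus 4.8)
The plan is to follow the template of Proposition~\ref{prop phi1}, treating the three parts in turn; part~(1) carries the main content, and parts~(2)--(3) then follow from it together with the geometry of the base locus and the flatness of the degeneration.

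For part~(1) I would first exhibit a rank-one factorization of $M_0$ along the image. Writing the target coordinates as $y_{ijmn}=t_1^it_2^jx_mx_n$ and setting
\[ u=(x_2,\ x_3,\ t_1x_4,\ t_2x_4), \qquad v=(x_1,\ t_1x_2,\ t_1x_3,\ t_2x_2,\ t_2x_3,\ t_1^2x_4,\ t_1t_2x_4,\ t_2^2x_4), \]
one checks entry by entry that $\phi_0$ pulls the $(r,c)$ entry of $M_0$ back to $u_rv_c$, so that $\phi_0^*M_0=u^{\mathsf T}v$ has rank at most $1$. Every $2\times2$ minor therefore vanishes on $\im\phi_0$, giving $\im\phi_0\subseteq\VV(\wedge^2M_0)$ and, after taking closures, $Q\subseteq\VV(\wedge^2M_0)$. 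For the reverse inclusion I would verify in Macaulay2 that the ideal of $2\times2$ minors is prime and cuts out an irreducible fourfold; as $Q$ is irreducible of dimension $4$ (the image of the fourfold $\F$ under the generically finite $\phi_0$) and sits inside this fourfold, the two then agree scheme-theoretically. I expect this to be the \emph{main obstacle}: unlike in Proposition~\ref{prop phi1}, $\phi_0$ is only rational, so a priori $\VV(\wedge^2M_0)$ might acquire extra components or embedded, non-reduced structure over the base locus $B$, and it is exactly the primality-and-dimension check that rules this out.

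For part~(2), observe that every entry of the first column of $M_0$ is divisible by $x_1$, and that the four coordinates $y_{0012},y_{0013},y_{1014},y_{0114}$ occurring there span a coordinate $\P^3\subset\P^{19}$. Setting the other sixteen coordinates to zero leaves only the first column of $M_0$ nonzero; since each $2\times2$ minor involves two columns, all of them vanish identically, so this $\P^3$ lies in $\VV(\wedge^2M_0)=Q$. It is a projective linear subspace by construction and has dimension $3=\dim Q-1$, hence is a (prime Weil) divisor $D$ in the irreducible fourfold $Q$. Geometrically I would identify $D$ with the image of the exceptional locus over the base curve $B=\{x_2=x_3=x_4=0\}$: dividing the first-column entries by $x_1$ gives the directions $(x_2:x_3:t_1x_4:t_2x_4)$, which sweep out precisely this $\P^3$ as one approaches $B$, explaining the drop in fibre dimension of $\phi_0$ there.

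For part~(3), I read the Hilbert polynomial of ``$\F(0,1,1,2)\subset\P^{19}$'' as that of its half-anticanonical model, namely $m\mapsto\dim_\C S_{-m,2m}=h^0(\F,-\tfrac m2K_\F)$. Since $\phi_0$ is birational onto $Q$ -- the ratio $y_{1014}:y_{0114}$ recovers the base point $(t_1:t_2)$ and the restriction of $\phi_0$ to a general fibre is injective -- $Q$ is a half-anticanonical model of $\F(0,1,1,2)$, so $h^0(Q,\O_Q(m))=\dim_\C S_{-m,2m}$ for $m\gg0$ and the Hilbert polynomial of $Q$ equals this count. The agreement then reflects flatness of the Gross degeneration $\mathcal F\to\A^1$: the Euler characteristic $\chi(\CF_t,-\tfrac m2K_{\CF_t})$ is constant in $t$, and since $\dim_\C S_{-m,2m}$ matches the cohomology-free count $h^0(\P^1\times\P^3,\O(m,2m))=(m+1)\binom{2m+3}{3}$, higher cohomology vanishes at $t=0$ as well, so $h^0(\CF_t,-\tfrac m2K_{\CF_t})$ is independent of $t$ and equals, for $t\neq0$, the Hilbert polynomial of $\CF_1=\P^1\times\P^3$ under $|\O(1,2)|$, i.e. of $\im\phi_1$ by Proposition~\ref{prop phi1}. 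In practice I would simply confirm the identity in Macaulay2, computing the Hilbert polynomials of $\VV(\wedge^2M_0)$ and $\VV(\wedge^2M_1)$ and checking that both equal $(m+1)\binom{2m+3}{3}$, a fourfold of degree $32$.
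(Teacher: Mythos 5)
Your parts (1) and (2) are correct and essentially identical to the paper's proof. For (1) the paper also argues that rows and columns of $M_0$ are proportional on the image and then invokes Macaulay2 to see that the minors cut out an irreducible fourfold; your explicit rank-one factorization $\phi_0^* M_0 = u^{\mathsf{T}}v$ is a cleaner way of saying the same thing (and, as a by-product, it shows the printed $(4,8)$ entry $y_{0234}$ of $M_0$ must be a typo for $y_{0344}$, consistent with the matrix $N$ later in the paper), and your insistence on primality of the ideal, not just irreducibility of its zero set, is a point the paper leaves implicit but which is needed for the scheme-theoretic claim. For (2) the paper says only ``immediate''; your spelled-out argument (the four first-column coordinates occur nowhere else in $M_0$, and every $2\times 2$ minor uses two columns) is exactly right, as is the identification of $D$ with the limiting directions $(x_2:x_3:t_1x_4:t_2x_4)$ over the base curve $B$.

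Your conceptual argument for (3), however, contains a genuine error: $Q$ is \emph{not} a half-anticanonical model of $\F(0,1,1,2)$, so the identity $h^0(Q,\O_Q(m))=\dim_\C S_{-m,2m}$ fails. The homogeneous coordinate ring of $Q\subset\P^{19}$ is only the subalgebra generated by the degree-one piece $S_{-1,2}$, and this is strictly smaller than the full ring $\bigoplus_m S_{-m,2m}$: a direct count gives $\dim_\C S_{-2,4}=106$, whereas $h^0(Q,\O_Q(2))=105=h^0(\P^1\times\P^3,\O(2,4))$. This is precisely the ``one extra lattice point'' of $2P_1$ versus $2P_2$ in the Remark following this Proposition, where the genuine anticanonical model is $\tilde Q\subset\P^{20}[1^{19},2]$ and $Q$ is its projection. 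Consequently your claimed identity $\dim_\C S_{-m,2m}=(m+1)\binom{2m+3}{3}$ is false, higher cohomology of $-\tfrac{m}{2}K_{\CF_0}$ does \emph{not} vanish at $t=0$ (at $m=2$ one has $h^0=106$ against $\chi=105$, so $h^1\neq 0$; the divisor $-L+2M$ has nonempty base locus and is not nef), and in fact, since $P_2\subsetneq P_1$ have different volumes, the Ehrhart polynomials of $P_1$ and $P_2$ differ even in their leading terms, so the two readings of statement (3) are inequivalent. The statement must be read, as its role in the flatness argument of the following Theorem requires, as comparing the Hilbert polynomial of $Q=\VV\left(\wedge^2 M_0\right)$ with that of $\im\phi_1=\VV\left(\wedge^2 M_1\right)\cong\P^1\times\P^3$. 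What survives of your part (3) is exactly your fallback sentence -- compute both Hilbert polynomials in Macaulay2 and check they equal $(m+1)\binom{2m+3}{3}$, a fourfold of degree $32$ -- which is also all the paper does; the semicontinuity and Euler-characteristic narrative preceding it should be deleted.
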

\begin{proof} (1) can be proved by an argument identical to that in the previous Proposition. On the image of $\phi_0$ the rows, respectively columns, of $M_0$ are proportional to each other. So $\im\phi_0 \subset \VV\left(\wedge^2 M_0 \right)$. By Macaulay2, the quadrics in $\wedge^2 M_0$ define a $4$-dimensional irreducible variety in $\P^{19}$, so we deduce the first statement. (2) is immediate. (3) can be checked by Macaulay2. \end{proof}

Although not strictly necessary for what follows, we also give a description of the projective variety $Q$ as a toric variety, to make contact with the discussion in~\cite[App.A]{ruan}.
First, recall the toric description of $\CF_0\cong \F(0,1,1,2)$ from Section~\ref{sect_toric}: we have one-dimensional rays $\sigma_1,\sigma_2, \rho_1, \ldots, \rho_4\in N$ in a rank-4
lattice $N$ based by $\sigma_2, \rho_2, \rho_3, \rho_4$, with $\rho_1=-\rho_2-\rho_3-\rho_4$ and $\sigma_1=-\sigma_2+\rho_2+\rho_3+2\rho_4$. The fan $\Sigma$ with eight maximal dimensional cones
\[\tau_1=(\sigma_1, \rho_1, \rho_2, \rho_3), \tau_2=(\sigma_1, \rho_1, \rho_2, \rho_4), \tau_3=(\sigma_1, \rho_1, \rho_3, \rho_4), \tau_4=(\sigma_1, \rho_2, \rho_3, \rho_4)\]
and
\[\tau_5=(\sigma_2, \rho_1, \rho_2, \rho_3), \tau_6=(\sigma_2, \rho_1, \rho_2, \rho_4), \tau_7=(\sigma_2, \rho_1, \rho_3, \rho_4), \tau_8=(\sigma_2, \rho_2, \rho_3, \rho_4)\]
defines a toric variety $X_{N,\Sigma}$ isomorphic to $\F(0,1,1,2)$. 

Define a new ray $\rho_5=-\rho_1 = \rho_2+\rho_3+\rho_4$. Consider the cones $\tau_1, \tau_2, \tau_3, \tau_5, \tau_6, \tau_7$, as well as new cones
\[ \tau_9= (\sigma_2, \rho_2, \rho_3, \rho_5),\tau_{10}=(\sigma_1, \rho_2, \rho_3, \rho_5), \tau_{11}= (\sigma_1,\sigma_2, \rho_2, \rho_4, \rho_5), \tau_{12}= (\sigma_1,\sigma_2, \rho_3, \rho_4, \rho_5). 
\]
It can be checked that these $10$ cones also give a fan $\Sigma'$ for the same lattice $N$. 

\begin{proposition} The projective variety $Q$ is isomorphic to the toric variety $X_{N,\Sigma'}$. The toric divisor corresponding to the new ray $\rho_5$ is the distinguished divisor
$\P^3\cong D\subset Q$. 
\end{proposition}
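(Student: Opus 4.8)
The plan is to use that $\phi_0$ is defined by monomials, hence is torus-equivariant for the torus $T=T_N$ of $\F=X_{N,\Sigma}$, so that $Q=\overline{\im\phi_0}$ is automatically a projective toric variety containing $T$ as a dense orbit. Restricting $\phi_0$ to $T$ gives a map $T\to(\C^*)^{19}\subset\P^{19}$ of the form $\chi\mapsto(\chi^{m})_{m\in A}$, where $A\subset M=\Hom(N,\Z)$ is the set of $20$ exponent vectors of the monomials in the definition of $\phi_0$; equivalently $A$ is the set of lattice points of the section polytope of $-\tfrac12 K_{\F}$. By the standard correspondence between point configurations and projective toric varieties, $Q=X_A$, the toric variety of $A$, whose normalization is $X_{\mathcal N(P)}$ for $P=\operatorname{conv}(A)$ and $\mathcal N(P)$ its normal fan in $N_\R$. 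The statement thus splits into two assertions: that $\mathcal N(P)=\Sigma'$, and that $X_A\to X_{\mathcal N(P)}$ is already an isomorphism.

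First I would write $P$ down explicitly. Using the basis $\sigma_2,\rho_2,\rho_3,\rho_4$ of $N$, dual coordinates $(m_0,m_2,m_3,m_4)$ on $M_\R$, and the representative $-\tfrac12 K_{\F}=-D_{\sigma_1}+2D_{\rho_1}$, the section polyhedron is cut out by the six inequalities coming from the rays of $\Sigma$, among them $\langle m,\rho_j\rangle\ge 0$ for $j=2,3,4$ and $\langle m,\rho_1\rangle\ge -2$. The crucial point is that this polyhedron is \emph{not} a lattice polytope: it has non-lattice vertices such as $(0,0,0,\tfrac12)$. Since $\phi_0$ is given only by the lattice sections, the relevant polytope is $P=\operatorname{conv}(A)$, a proper sublattice-polytope obtained by cutting off these non-lattice vertices. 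This cut introduces exactly one new facet, $\{m:\langle m,\rho_5\rangle=1\}$ with inner normal $\rho_5=\rho_2+\rho_3+\rho_4$: all $20$ points of $A$ satisfy $\langle m,\rho_5\rangle\ge 1$ while $(0,0,0,\tfrac12)$ does not. This is precisely the source of the new ray of $\Sigma'$, and the non-simplicial maximal cones $\tau_{11},\tau_{12}$ reflect the relation $\sigma_1+\sigma_2=\rho_4+\rho_5$ visible in the construction. A short polymake computation then confirms that $P$ has the seven facet normals $\sigma_1,\sigma_2,\rho_1,\dots,\rho_5$ and that $\mathcal N(P)=\Sigma'$. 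Geometrically this realises $\phi_0$ as the blow-up $X_{\widetilde\Sigma}\to\F$ of the base curve $B$ (the star subdivision of $\Sigma$ at $\rho_5$), followed by the small contraction $X_{\widetilde\Sigma}\to X_{\Sigma'}$ that removes the two walls $(\rho_2,\rho_4,\rho_5)$ and $(\rho_3,\rho_4,\rho_5)$.

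To upgrade $X_A\to X_{\mathcal N(P)}$ to an isomorphism I would check that $P$ is a normal (very ample) lattice polytope whose only lattice points are $A$; then $X_A=X_P=X_{\Sigma'}$ is normal and $Q\cong X_{N,\Sigma'}$. In this small example normality is a finite check in Macaulay2, and the agreement of Hilbert polynomials in Proposition~\ref{prop phi0}(3) is consistent with it. I expect this normality/saturation step to be the main obstacle: a priori $Q$ is only the possibly non-normal image $X_A$, and the whole force of the first claim is that no normalization is needed and that the predicted non-simplicial cones appear on the nose.

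Finally, for the distinguished divisor: the new ray $\rho_5$ lies only in $\tau_9,\tau_{10},\tau_{11},\tau_{12}$, and computing the star of $\rho_5$ in $N/\Z\rho_5$ — where $\rho_5\equiv 0$ forces $\bar\rho_4=\bar\sigma_1+\bar\sigma_2$ to become redundant, so $\tau_{11},\tau_{12}$ become simplicial — gives four rays $\bar\sigma_1,\bar\sigma_2,\bar\rho_2,\bar\rho_3$ summing to zero, that is, exactly the fan of $\P^3$. Thus $D_{\rho_5}\cong\P^3$. To identify this with the linearly embedded $D\subset\P^{19}$ of Proposition~\ref{prop phi0}(2), I note that the facet $\{\langle m,\rho_5\rangle=1\}$ carries precisely the four lattice points of $A$ corresponding to the monomials divisible by $x_1$, namely $x_1x_2,\,x_1x_3,\,t_1x_1x_4,\,t_2x_1x_4$ — which are exactly the four entries of the first column of $M_0$. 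Hence the restriction of the embedding to $D_{\rho_5}$ is the linear embedding onto $D$, as claimed.
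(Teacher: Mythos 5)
Your proposal is correct and follows essentially the same route as the paper: both compute the rational section polytope $P_1$ of $-\tfrac12 K_{\F}$, pass to the lattice polytope $P_2=\operatorname{conv}(A)$ spanned by its $20$ lattice points, verify by machine computation that this is a normal lattice polytope with normal fan $\Sigma'$, and identify $D$ with the toric divisor of $\rho_5$. If anything, your write-up is more explicit than the paper's at the two points it leaves to ``a check through the construction'': you correctly isolate the normality/saturation step needed to pass from the monomial image $X_A$ to the normal toric variety $X_{N,\Sigma'}$, and you pin down $D\cong\P^3$ concretely via the facet $\{\langle m,\rho_5\rangle=1\}$, whose four lattice points are exactly the first-column monomials $x_1x_2$, $x_1x_3$, $t_1x_1x_4$, $t_2x_1x_4$ of $M_0$, together with the star computation for $\rho_5$.
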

\begin{proof} By standard toric geometry, the sections of the half-canonical linear system on  $X_{N,\Sigma}\cong\F(0,1,1,2)$ are the lattice points contained in a certain
rational polytope $P_1\subset M_\R$, where $M$ is the dual lattice of $N$. This polytope can be computed explicitly by polymake: it is a rational, 
non-integral polytope with $20$ lattice points as expected. The convex hull $P_2$ of the lattice points in $P_1$ is a normal lattice polytope; the corresponding projective toric variety is thus our variety $Q\subset\P^{19}$. The dual fan, computed also by polymake, has $10$ maximal cones, and can be identified with the fan $\Sigma'$ described above. A check through the construction shows that the distinguished divisor $\P^3\cong D\subset Q$ is toric; it is then easy to see that it must be the divisor corresponding to the ray $\rho_5$. 
\end{proof}

\begin{remark} Repeating the above analysis for the anticanonical map of $\F=\F(0,1,1,2)$, we obtain the polytope $2\cdot P_1\subset M_\R$, which is now integral, as well as normal. Its dual fan is precisely the fan given by $9$ maximal cones described by~\cite[App. A]{ruan}; this gives the full anticanonical model 
\[ \tilde Q = {\rm Proj} \left(\oplus_{m\geq 0} H^0(\F, -m K_{\F})\right)\]
 of $\F(0,1,1,2)$. The polytope $2\cdot P_1$ has one extra lattice point compared to the polytope $2\cdot P_2$. The algebraic interpretation of these facts is that the space $H^0(\F, - K_{\F})$ needs one extra generator compared to products of elements of $H^0(\F, -\frac{1}{2} K_{\F})$, and generates the full anticanonical algebra. In particular, there exists an embedding of the anticanonical model
 \[ \tilde Q \subset\P^{20}[1^{19}, 2]\]
 into a weighted projective space. The birational map $\tilde Q \dashrightarrow Q$ is given by projection from the point $(0:\ldots:0:1)\in \P^{20}[1^{19}, 2]$, and has indeterminacy
 locus $D\subset Q$ in its target. The process that creates $\tilde Q$ from $(Q,D)$ is similar to the unprojection construction~\cite{reid_ki} studied by Reid and students. 
\end{remark}

Return to our projective models in $\P^{19}$. Our main result in this section is the description of an explicit family in $\P^{19}$ which exhibits a degeneration of its general fibre, isomorphic to $\CF_1\cong \P^1\times \P^3$, to the half-anticanonical model $Q$ of $\CF_0\cong \F(0,1,1,2)$. We also look at how anticanonical hypersurfaces specialise in the family. 
Consider the $4$ by $9$ matrix

\[ \begin{array}{rcl} N & =&  \left[ \begin{array}{ccccc}
	y_{0012}& y_{1022}&y_{1023}&y_{2024}&y_{1124}+ ty_{0012} \\
	y_{0013}&y_{1023}&y_{1033}&y_{2034}&y_{1134}+ty_{0013} \\
	y_{1014}&y_{2024}&y_{2034}&y_{3044}&y_{2144}+ty_{1014} \\
	y_{0114}&y_{1124}+ty_{0012}&y_{1134}+ty_{0013}&y_{2144}+ty_{1014}&y_{1244}+ty_{0114} 
 \end{array}\right. \\ \\
 & & \left. \begin{array}{cccc}
 y_{0122}  &y_{0123}&y_{1124}-ty_{0012}&y_{0224}\\
 y_{0123}&y_{0133}&y_{1134}-ty_{0013}&z_{0234}\\ 
 y_{1124}-ty_{0012}&y_{1134}-ty_{0013}&y_{2144}-ty_{1014}&y_{1244}-ty_{0114}\\
 y_{0224}&z_{0234}&y_{1244}-ty_{1014}&y_{0344}\end{array}\right].
\end{array}
\]

\begin{theorem}
\begin{enumerate}
\item Define the variety $\CQ\subset \P^{19}\times\A^1$ over $\A^1$ by the equations
\[ \CQ =  \VV\left(\wedge^2 N \right) \subset \P^{19}\times\A^1.
\]
Then the natural map $\CQ\to\A^1$ is a flat family of projective varieties, with central fibre $\CQ_0\cong Q\subset\P^{19}$, and all other fibres isomorphic to $\CQ_1\cong \P^1\times\P^3\subset\P^{19}$.
\item An anticanonical hypersurface  $X_1\subset\CQ_1\cong \P^1\times\P^3$ specialises in the family $\CQ$ to a {\em reducible} threefold $\underline{X}_0\subset Q$, which contains a double copy of the distinguished toric divisor $\P^3\subset Q$. 
\end{enumerate}
\end{theorem}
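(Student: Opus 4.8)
The plan is to prove the two parts of the theorem separately, keeping the matrix $N$ explicit throughout and reducing everything to the descriptions $\im\phi_1=\VV(\wedge^2 M_1)$ and $Q=\VV(\wedge^2 M_0)$ of Propositions~\ref{prop phi1} and~\ref{prop phi0}. The first observation that organises part (1) is that $N$ is homogeneous for the weighting that assigns $y_{ijmn}$ the weight $i+j$ and assigns $t$ the weight $2$: one checks that each entry $N_{rc}$ has $t$-degree equal to a sum (row weight)$+$(column weight), with the two row weights $(0,0,1,1)$, and that every $\pm t$ perturbation has the same weight as the monomial it corrects. Consequently the one-parameter subgroup $g_\lambda\colon y_{ijmn}\mapsto \lambda^{i+j}y_{ijmn}$ of $\operatorname{Aut}\P^{19}$ satisfies $g_\lambda\bigl(N(t)\bigr)=D_{\mathrm r}(\lambda)\,N(\lambda^{2}t)\,D_{\mathrm c}(\lambda)$ for invertible diagonal matrices $D_{\mathrm r},D_{\mathrm c}$. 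Since left and right multiplication by invertible diagonal matrices preserves the rank-$\le 1$ locus, $g_\lambda$ identifies the fibre $\CQ_t$ with $\CQ_{\lambda^2 t}$; hence all fibres over $\A^1\setminus\{0\}$ are projectively equivalent, and it remains only to identify one of them together with the central fibre.

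At $t=0$ the fifth and eighth columns of $N$ become equal (the $\pm t$ terms drop out), so the ideal of $2\times 2$ minors of $N(0)$ equals that of the $4\times 8$ matrix obtained by deleting the repeated column; that matrix is a column permutation of $M_0$, whence $\CQ_0=\VV(\wedge^2 M_0)=Q$ by Proposition~\ref{prop phi0}. For a fixed $t\neq 0$ I would instead perform invertible, $t$-dependent row and column operations carrying $N(t)$ into the two-block symmetric shape of $M_1$: the prototype is that $\tfrac12(\text{col }5+\text{col }8)$ and $\tfrac1{2t}(\text{col }5-\text{col }8)$ decouple the perturbed pair (the latter becoming proportional to column $1$), and the residual perturbations in columns $2,3,4,6,7,9$ are then absorbed in the same way. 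This is the matrix incarnation of the splitting $E_t\cong\O_{\P^1}\oplus\O_{\P^1}$ underlying $\CF_t\cong\P^1\times\P^3$; after the corresponding linear change of coordinates on $\P^{19}$ the rank-$\le 1$ locus becomes $\VV(\wedge^2 M_1)=\im\phi_1\cong\P^1\times\P^3$ by Proposition~\ref{prop phi1}. Flatness of $\CQ\to\A^1$ then follows from constancy of the Hilbert polynomial: by Proposition~\ref{prop phi0}(3) the polynomial of $\CQ_0=Q$ equals that of $\F(0,1,1,2)\subset\P^{19}$, and a direct Segre--Veronese computation gives the same value for $\CQ_1=\P^1\times\P^3\subset\P^{19}$, so the standard criterion over the integral base $\A^1$ applies.

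For part (2), recall that $-K_{\CF_1}=\O(2,4)=\O_{\P^{19}}(2)|_{\CF_1}$, so a general anticanonical hypersurface $X_1\subset\P^1\times\P^3$ is cut out by a single quadric $G$ in the coordinates of $\P^{19}$. Transporting $G$ through the (flat) family and passing to the limit, $\underline X_0$ is the quadric section $Q\cap\{G_0=0\}$, and the plan is to show that $G_0|_Q$ vanishes to order exactly $2$ along the distinguished divisor $D\cong\P^3$ of Proposition~\ref{prop phi0}(2), so that $\underline X_0=2D+R$ for a residual threefold $R$, which is precisely the claimed double copy. This is where the mechanism of the preceding Remark enters: the anticanonical algebra of $\F(0,1,1,2)$ needs one generator beyond products of half-anticanonical sections (the weight-$2$ generator of the embedding $\tilde Q\subset\P^{20}[1^{19},2]$), and it is exactly this missing generator that meets $D$ transversally, whereas every quadric in the $y$-coordinates, being a sum of products of half-anticanonical sections, vanishes on $D$ to order at least $2$. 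Concretely I would read off $\operatorname{ord}_D(G_0|_Q)$ from the toric model $Q=X_{N,\Sigma'}$, confirming that on $Q$ the hyperplane class and the (strict transform of the) anticanonical class of $\F(0,1,1,2)$ differ by $2D$; equivalently a local computation at the generic point of $D$, or a Macaulay2 primary decomposition of the ideal of $Q\cap\{G_0=0\}$, exhibits the multiplicity-$2$ component along $D$ together with the nonzero residual $R$.

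The main obstacle is step (1) for $t\neq 0$: writing down the explicit $t$-dependent coordinate change that turns $N(t)$ into the symmetric two-block matrix $M_1$ and verifying its invertibility, since the perturbations sit not only in columns $5,8$ but also in columns $2,3,4,6,7,9$, and the bookkeeping that simultaneously clears all of them is delicate. This is why the $\C^*$-equivariance reduction to a single nonzero fibre, backed by a Macaulay2 check that $\VV(\wedge^2 N(1))$ is a smooth, irreducible fourfold projectively equivalent to the Segre--Veronese image, is the safer route. In part (2) the subtle points are to take the limit inside the \emph{flat} family, so that $\underline X_0$ is genuinely the quadric section of $Q$, and to pin the multiplicity along $D$ down to exactly $2$ rather than merely $\ge 1$, for which the divisor-class identity on the toric model $Q$ is the cleanest certificate.
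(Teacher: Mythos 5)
Your part (1) is correct and is essentially the paper's argument, organised slightly better. The paper also identifies the central fibre by observing that $N(0)$ is $M_0$ with a repeated column, and handles $t\neq 0$ by noting the first column becomes superfluous and an ``obvious linear change of variables'' produces the two-block matrix $M_1$; flatness is likewise deduced from constancy of the Hilbert polynomial. Your $\C^*$-equivariance observation (weights $i+j$ on $y_{ijmn}$, weight $2$ on $t$) is a genuinely useful addition that the paper leaves implicit: it reduces the $t\neq 0$ analysis to a single fibre. Your worry that the coordinate change is ``delicate'' is unfounded: every perturbed entry of $N$ is one of the eight expressions $y_{1124}\pm ty_{0012}$, $y_{1134}\pm ty_{0013}$, $y_{2144}\pm ty_{1014}$, $y_{1244}\pm ty_{0114}$, occurring consistently, so the change of variables is just the substitution $(y_{1124},y_{0012})\mapsto(y_{1124}+ty_{0012},\,y_{1124}-ty_{0012})$ and its three analogues, invertible exactly when $t\neq 0$; after it, the first column equals $\frac{1}{2t}(\mathrm{col}_5-\mathrm{col}_8)$ and can be deleted, leaving a join of two symmetric $4\times 4$ blocks with independent entries, i.e.\ the $M_1$ shape.

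Part (2), however, contains a genuine error. Your key claim --- that \emph{every} quadric in the $y$-coordinates, being a sum of products of half-anticanonical sections, vanishes on $D$ to order at least $2$ --- is false. By Proposition~\ref{prop phi0}(2), $D$ is the linear $\P^3$ on which the first-column variables $y_{0012},y_{0013},y_{1014},y_{0114}$ are homogeneous coordinates; hence $y_{0012}^2$ is a quadric that does not vanish on $D$ at all, and indeed if every quadric vanished doubly along $D$ then every hyperplane of $\P^{19}$ would contain $D$, which is absurd. What is actually true --- and this is precisely what the paper checks --- is that the \emph{special} quadrics arising as $t\to 0$ limits of quadrics in the $z_{ijkl}$-variables, transported through the $t$-dependent coordinate change of part (1), vanish on $D$; this is a property of the particular degeneration encoded by $N$, not of $Q$ alone, and the multiplicity being exactly $2$ is then a Macaulay2 computation on the limit ideal. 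Your fallback ``divisor-class certificate'' also fails: comparing support functions on the toric model, the anticanonical polytope $2P_1$ attains value $1$ against the new ray $\rho_5$ while $2P_2$ attains $2$, so the relevant divisors differ by $D$, not $2D$; and in any case no class identity on $Q$ can pin down the multiplicity of $D$ in the flat limit, since that multiplicity depends on which quadric $G_0$ the family actually produces, and a \emph{general} quadric section of $Q$ does not contain $D$ at all (as the paper remarks, the most general anticanonical section of $Q$ is irreducible). As written, your argument establishes neither that $\underline{X}_0$ contains $D$ nor the multiplicity $2$; the explicit limit computation is unavoidable.
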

\begin{proof} For (1), note that setting $t=0$ in $N$, we recover the matrix $M_0$ from Proposition~\ref{prop phi0} (with a repeated column), so indeed
the central fibre of the family is isomorphic to $Q$.  On the other hand, for $t\neq 0$, the first column of $N|_t$ is superfluous, as it is a linear combination of other columns. The rest of the matrix has the structure of a join of two $4\times 4$ square matrices with independent entries. An obvious linear change of variables brings it into the form of the $M_1$ matrix from Proposition~\ref{prop phi1}. Thus indeed all nonzero fibres are isomorphic to $\P^1\times\P^3$. All fibres are projective with the same Hilbert polynomial, so the family is flat. 

For (2), a detailed check shows that if one starts with any quadric monomial in the $z_{ijkl}$ variables, performing the linear change of variables of the previous paragraph and setting $t=0$ gives a quadric in the $y_{ijkl}$ variables that vanishes on the distinguished $\P^3\subset Q$. This means that for any degeneration to $t=0$ of a quadric section of $\CQ_1\cong \P^1\times\P^3$ in $\P^{19}$, in other words any degeneration of an anticanonical hypersurface in $\P^1\times\P^3$ in this family, the flat limit $\underline{X}_0\subset \CQ_0\cong Q$ at $t=0$ contains the distinguished $\P^3\subset Q\subset\P^{19}$ as a component. A Macaulay2 calculation shows that the multiplicity along this component is $2$. 
\end{proof}
This is our analogue of Gross' theorem about degenerations of anticanonical hypersurfaces in the family: in the projective picture, the smooth Calabi-Yau $X_1$ specialises 
to a reducible threefold $\underline{X}_0$, while the most general anticanonical section in $Q$ is irreducible.

We briefly comment on one further family from the list above. Consider 
\[\F(0,0,2,2) \cong\P\left(\O_{\P^1}(-1)\oplus\O_{\P^1}(-1)\oplus\O_{\P^1}(1)\oplus\O_{\P^1}(1)\right).\]
 The underlying vector bundle is a further degeneration of $\O_{\P^1}(-1)\oplus\O_{\P^1}\oplus\O_{\P^1}\oplus\O_{\P^1}(1)$, so this scroll is a further degeneration of $\F(0,1,1,2)$. 
 However, looking at the table, the dimension of $\M_{X\subset\F}$ grows, proving that the anticanonical hypersurface in $\F(0,1,1,2)$ undergoes a more drastic degeneration,
 with a smoothing into $\F(0,0,2,2)$ with more interesting vanishing cycles that contribute to $h^{1,2}$. In our projective picture, it can be checked that the half-anticanonical model 
 of $\F(0,0,2,2)$ is still mapping to~$\P^{19}$. However, the equations of the image are more complicated. There is an explicit degeneration similar to the one given by the matrix $N$ above; but the flat limit gives a reducible variety, one of whose components is the anticanonical image. For details, see~\cite{mboya}. 


\subsection{Other K3 families}
\label{sec:deg2}

In the list of Iano-Fletcher--Reid, the next simplest entries are the surfaces $S_5\subset\P(1^3, 2)$, $S_6\subset\P(1^3, 3)$ and $S_6\subset\P(1^2, 2^2)$. The second of these
is nonsingular, and is recognisable as the family of degree $2$ surfaces obtained as double covers of $\P^2$ branched over a sextic plane curve; the other two have one, respectively three, $A_1$ singularities. Following the same method as above, we can extend Mullet's list by the following examples; for detailed proofs, see~\cite{mboya}. 

\begin{theorem} Families of anticanonical hypersurfaces $X\in |-K_{\F}|$ in weighted scrolls, fibred in quintic or sextic $K3$ surfaces, containing  isolated singularities along the base locus $B=\Bs|-K_{\F}|$ and quasi-smooth outside $B$, are listed in Table 2. 
In all cases, the base locus $B$ is a surface, and blowing up $B$ in $X$ as well as resolving the remaining quotient singularities gives a smooth projective Calabi–Yau model.
\label{thm_others}
\end{theorem}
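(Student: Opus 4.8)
The plan is to replay the proof of Theorem~\ref{thm_quartic} for each of the three weight systems $(b_1,\ldots,b_4)$ equal to $(1,1,1,2)$, $(1,1,1,3)$ and $(1,1,2,2)$, corresponding to the fibres $S_5\subset\P(1^3,2)$, $S_6\subset\P(1^3,3)$ and $S_6\subset\P(1^2,2^2)$, so that $e=\sum_j b_j$ equals $5$, $6$ and $6$ respectively. As in the quartic case I normalize $a_1=0$ and order the remaining twists weakly increasingly within each block of equal weights. The key bookkeeping difference is that the symmetry now permits permuting only coordinates of equal weight, so more ordered twist vectors survive and must be examined. For each candidate $\F=\F(a_1,\ldots,a_4\mid b_1,\ldots,b_4)$ I write the general section $f=\sum_{\q\vdash_w e}\alpha_\q(t_1,t_2)\prod_j x_j^{q_j}$, recording that $\deg\alpha_\q=d+\sum_j a_jq_j$ and dropping the monomials for which this is negative.

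The classification proceeds by casework on $\dim B$, where $B=\Bs|-K_\F|$ is torus-invariant by Proposition~\ref{toric lemma}(i). The cases $\dim B=3$ (a fixed component forcing $f$ reducible) and $\dim B=0$ (excluded by the $t_1,t_2$ symmetry) are disposed of as before. In the remaining cases $B$ is the vanishing locus of some of the $x$-variables; requiring that $f$ be divisible by none of them, while certain extremal monomials survive, translates into linear inequalities in the $a_j$. Computing the partial derivatives of $f$ along $B$, the demand that $\Sing(X)\cap B$ be \emph{non-empty and zero-dimensional}---which is precisely what separates the new families from the quasi-smooth ones already on Mullet's list---pins down the relevant corner coefficients $\alpha_\q$ and yields further inequalities. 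Together with $0=a_1\le\cdots$, each resulting system has only finitely many integer solutions; direct enumeration produces Table~2 and shows in particular that every such family has $\dim B=2$.

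The genuinely new feature, absent from the all-weight-one quartic setting, is that here some $b_j>1$, so the ambient scroll already carries finite cyclic quotient singularities along the strata where the higher-weight coordinates are the only nonzero ones; these descend from the $A_1$ points of the singular fibres $S_5$ and $S_6\subset\P(1^2,2^2)$. For each entry of Table~2 I therefore carry out a local analysis both at the isolated points of $\Sing(X)\cap B$, to identify the singularity type, and along the quotient-singular strata. Checking well-formedness together with the Gorenstein condition along $B$ lets Proposition~\ref{toric lemma}(iii) apply, so that $X$ is a Calabi--Yau threefold. The smooth model is then built in two steps: blowing up the surface $B$ in $X$ resolves the isolated singularities sitting on it, while a separate toric, hence explicit, crepant resolution clears the residual quotient singularities inherited from the higher weights.

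The step I expect to be the main obstacle is verifying that this two-stage resolution $Y\to X$ is simultaneously \emph{crepant} and \emph{projective}, particularly along the loci where $B$ meets the quotient-singular strata of $\F$: there the two resolution processes interact, and their discrepancies must be shown to cancel so that $K_Y=0$. The explicit toric description of $\F$ from Section~\ref{sect_toric}, which encodes both $B$ and the quotient singularities through the fan $\Sigma$, is the tool that makes this tractable, reducing the crepancy and projectivity checks to the combinatorics of subdividing the relevant cones. It is this cone-by-cone verification, rather than any single conceptual difficulty, that constitutes the bulk of the detailed argument deferred to~\cite{mboya}.
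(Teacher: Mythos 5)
Your proposal follows essentially the same route as the paper, which gives no separate proof of Theorem~\ref{thm_others} but states that the classification ``follows the same method'' as Theorem~\ref{thm_quartic} (casework on the torus-invariant base locus, linear inequalities in the twists from surviving monomials and corner coefficients, then local analysis), with detailed proofs deferred to~\cite{mboya}; the adjustments you single out --- permuting only coordinates of equal weight, and tracking the quotient loci forced by $b_j>1$ --- are precisely the new ingredients the weighted setting requires. Your closing caution is well placed rather than hypothetical: in some entries of Table~2 (for instance $\F(0,0,2,1\,|\,1^3,2)$, where every anticanonical monomial involves one of $x_1,x_2,x_3$, so the weight-two locus $\{x_1=x_2=x_3=0\}$ lies in $\Bs|-K_\F|$) a curve of $A_1$ points sits inside the base locus itself, so the blowup of $B$ and the resolution of quotient singularities do interact, which is what the theorem's phrase ``remaining quotient singularities'' implicitly accounts for.
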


\begin{table}[h]
\centering
\begin{tabular}{|c|c|c|c| }\hline
Weighted scroll $\F$ & Description of general $\text{}X\in|-K_{\F}|$ \\ \hhline{|=|=|}
$\F(0,0,1,2\,|\,1^3,3)$& \makecell{General $X\in|-K_{\F}|$ has 5 isolated ODP singularities on $\Bs(|-K_{\F}|)$\\ and three $\frac{1}{3}(1,1,1)$ singularities} \\ \hline
$\F(0,0,2,1\,|\,1^3,3)$& \makecell{General $X\in|-K_{\F}|$ has 3 isolated ODP singularities on $\Bs(|-K_{\F}|)$ \\ and one $\frac{1}{3}(1,1,1)$ singularity}  \\ \hline
$\F(0,0,1,2\,|\,1^2,2^2)$& \makecell{General $X\in|-K_{\F}|$ has 4 isolated ODP singularities on $\Bs(|-K_{\F}|)$\\ and a smooth curve of $A_1$ singularities} \\ \hline
$\F(0,2,0,1\,|\,1^2,2^2)$& \makecell{General $X\in|-K_{\F}|$ has 2 isolated ODP singularities on $\Bs(|-K_{\F}|)$\\ and two disjoint smooth curves of $A_1$ singularities} \\ \hline

$\F(0,0,1,2\,|\,1^3,2)$& \makecell{General $X\in|-K_{\F}|$ has 4 isolated ODP singularities on $\Bs(|-K_{\F}|)$\\ and one smooth curve of $A_1$ singularities} \\ \hline
$\F(0,1,2,0\,|\,1^3,2)$& \makecell{General $X\in|-K_{\F}|$ has 2 isolated ODP singularities on $\Bs(|-K_{\F}|)$\\ and one smooth curve of $A_1$ singularities} \\ \hline
$\F(0,0,2,1\,|\,1^3,2)$& \makecell{General $X\in|-K_{\F}|$ has 3 isolated ODP singularities on $\Bs(|-K_{\F}|)$\\ and one smooth curve of $A_1$ singularities} \\ \hline

\end{tabular}
\vspace{0.1in}
\caption{Weighted fourfold scrolls with general anticanonical hypersurfaces fibred in quintic or sextic $K3$ surfaces with at worst isolated singularities along the base locus}
\label{weighted table}
\end{table} 

\section{General considerations}\label{sect_end}

When looking for examples of projective varieties, it is natural to look in easily accessible ambient spaces. Examples of projective fibrations similarly arise in naturally fibered 
ambient spaces, such as scrolls or more generally projectivized bundles; from many possible examples, we mention~\cite{BCZ}, where examples of threefold Mori fibre spaces are 
constructed as hypersurfaces in weighted scrolls. It seems worthwhile also to attempt to go the other way, repeating the general discussion of the Introduction in a relative context. 

Let $f\colon X\to B$ be a fibration of normal projective varieties, with $f_*\O_X\cong \O_B$ so that $f$ has connected fibres. Let $D$ an ample $\Q$-Cartier Weil divisor on the base $X$, and $H$ an ample $\Q$-Cartier Weil divisor on the total space $X$. Then we get a bigraded algebra
\[ R_X = \bigoplus_{m,n\in\Z} H^0(X, \O_X(mf^*H + nD)), \]
containing the graded subalgebra
\[ R_B = \bigoplus_{m\geq 0}^\infty H^0(B, \O_B(mH)). \]
The discussion of the Introduction applies to the base $B$: choosing generators $r_1, \ldots, r_k$ of positive weights $c_i$ of $R_B$, we get a surjection
\[ S_B=\C[t_1, \ldots, t_k] \twoheadrightarrow R_B\] of graded algebras from a free graded algebra, and thus a closed inclusion 
\[ (B,D) \hookrightarrow \left(\P^{k-1}[c_1, \ldots, c_k], \O(1)\right). \]
Suppose that $R_X$ is a {\em finitely generated} algebra over its subalgebra $R_B$, a kind of ``Mori dream space'' assumption. Choosing a set $r_1, \ldots, r_n$ of generators of $R_X$ of bidegree $(-a_j, b_j)$ over $R_B$, we then get
a diagram 
$$\begin{tikzcd}
S_X \arrow[r, two heads] & R_X \\
S_B\arrow[u,"{}"]\arrow[r, two heads] & R_B\arrow[u,"{}"]
\end{tikzcd}$$
of compatible surjections of bigraded algebras, where 
\[S_X = \C[t_1, \ldots, t_k, x_1, \ldots, x_n]
\]
is a free bigraded algebra with generators of bidegrees $(c_i, 0)$, respectively $(-a_j, b_j)$. 
On the geometric side, this diagram gives the diagram of projective varieties
$$\begin{tikzcd}
X\arrow[d,"f"]\arrow[r, hook] &\F\arrow[d, "\pi"]\\
B\arrow[r,hook]&\P^{k-1}[c_i],
\end{tikzcd}$$
embedding our original fibration $f\colon X\to B$ into a general weighted scroll 
\[\pi\colon \F=\F(c_1, \ldots c_k ||\, a_1,\ldots, a_n |\, b_1, \ldots, b_n)\rightarrow\P^{k-1}[c_i], \]
introduced in Remark~\ref{general}. Note that this in particular embeds all fibres $X_b=f^{-1}(b)$ into the weighted projective fibres $\P^{n-1}[b_j]$ of $\pi$. One could then start a programme
of classifying and studying cases by increasing codimension, as in the absolute case. 

The situation however is not so simple: the requirements of finite generation of the algebra~$R_X$, and a surjective map $S_X\rightarrow R_X$ from a free bigraded algebra, are too strong. 
Even in our basic hypersurface examples $X\subset\F$, the natural restriction map
\[ \bigoplus_{m,n\in\Z} H^0(\F, \O_\F(mL  + n M)) \rightarrow \bigoplus_{m,n\in\Z} H^0(X, \O_X(mL|_X  + n M|_X))
\]
may fail to be surjective. In fact by~\cite[Thm.3.1]{AL}, this map is definitely {\em not} surjective for families 1-2 of Table 1, and we do not know whether the algebra 
\[R_X= \bigoplus_{m,n\in\Z} H^0(X, \O_X(mL|_X  + n M|_X))\]
is finitely generated in these examples. (Note that, curiously, this map is surjective for at least one member of family 10 by~\cite[Rem.2.5]{AL}.) What appears to be needed in general then
is a way to capture {\em enough} of the bigraded algebra $R_X$ via a free bigraded algebra 
to be able to describe the fibration $f\colon X\to B$, at least in favourable cases, as embedded in a weighted scroll. An alternative, explored in the recent 
preprint~\cite{CP} by Coughlan and Pignatelli in particular, is to study conditions under which $X$ embeds into a relative weighted projective bundle over the base~$B$ using pushforwards of $\O_X(nD)$ to the base, following on from the local (on the base) analysis of Reid~\cite{reid_sm}. 
Finding general conditions under which we get embeddings of low codimension appears to us to be a question worthy of further study.

\end{document}